\documentclass[11pt,reqno]{amsart}
\calclayout
\usepackage{amssymb,mathtools,enumerate}
\usepackage[hidelinks]{hyperref}
\newtheorem{Thm}{Theorem}[section]
\newtheorem{Lem}[Thm]{Lemma}
\newtheorem{Cor}[Thm]{Corollary}

\newcommand{\ZZ}{\mathbb{Z}}

\begin{document}
\title[A double-sum analogue of Whipple's summation formula]{A double-sum analogue of Whipple's summation formula and a restricted sum formula for multiple binomial sums}
\author{Takumi Maesaka}
\address{Faculty of Mathematics, Kyushu University, Motooka 744, Nishi-ku, Fukuoka, 819-0395, Japan}
\email{nozaki.takumi.912@s.kyushu-u.ac.jp}
\subjclass{11M32, 33C20}
\keywords{Generalized hypergeometric series, Whipple's summation formula, multiple zeta values}
\begin{abstract}
We prove a double-sum analogue of Whipple's ${}_3F_2$ summation formula. As an application, we establish a restricted sum formula for multiple binomial sums, which arise as special cases of Igarashi's parametrized multiple zeta series.
\end{abstract}
\maketitle

\section{Introduction}
For complex numbers $a,a_1,\dots,a_r$ and a nonnegative integer $n$, define the shifted factorials by
\begin{align*}
(a)_n&\coloneqq \begin{cases}
1&n=0\\
a(a+1)\cdots(a+n-1)&n>0
\end{cases}\\
(a_1,\dots,a_r)_n&\coloneqq (a_1)_n\cdots(a_r)_n.
\end{align*}
Whipple's summation formula is the following classical identity for the generalized hypergeometric series ${}_3F_2$.
\begin{Thm}[Whipple \cite{w}]\label{thm:whipple}
If $a+b=1$, $d+e=2c+1$, and $\Re(c)> 0$, then the identity
\begin{align*}
\sum_{0\leq n}\frac{(a,b,c)_n}{n!(d,e)_n}&=\frac{2^{1-2c}\pi\Gamma(d)\Gamma(e)}{\Gamma\left(\frac{a+d}2\right)\Gamma\left(\frac{a+e}2\right)\Gamma\left(\frac{b+d}2\right)\Gamma\left(\frac{b+e}2\right)}
\end{align*}
holds.
\end{Thm}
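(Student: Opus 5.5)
One classical route reduces Whipple's formula to Watson's ${}_3F_2$ summation via a three-term transformation. I will instead follow a more self-contained route: view both sides as functions of one parameter, prove the identity on a discrete set, and extend by Carlson's theorem. The constraints leave three free parameters. I use $c$, $a$ (with $b=1-a$) and $d$ (with $e=2c+1-d$). The series converges absolutely exactly when $\Re(d+e-a-b-c)=\Re(c)>0$, which explains the hypothesis $\Re(c)>0$.

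\textbf{Step 1 (terminating case).} Fix $c$ and $d$ generic, and take $a=-m$, $b=1+m$ for a nonnegative integer $m$. The left side is then a terminating, balanced-type ${}_3F_2(-m,1+m,c;d,2c+1-d;1)$. I would prove the closed form in this case by induction on $m$ through a contiguous relation in $m$. The sequence $f_m={}_3F_2(-m,m+1,c;d,e;1)$ satisfies a three-term recurrence in $m$; this is the Legendre-type recurrence, since $(-m,m+1)_n$ is essentially the shifted Legendre polynomial coefficient. The right side,
\[
\frac{2^{1-2c}\pi\Gamma(d)\Gamma(e)}{\Gamma\left(\frac{d-m}{2}\right)\Gamma\left(\frac{e-m}{2}\right)\Gamma\left(\frac{d+m+1}{2}\right)\Gamma\left(\frac{e+m+1}{2}\right)},
\]
changes from $m$ to $m+2$ by an explicit rational factor. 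So it suffices to check $m=0$ and $m=1$ and to verify that the right side satisfies the same recurrence.

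For the base cases, $m=0$ gives $1$, which matches the right side by the duplication formula $\Gamma(d)=2^{d-1}\pi^{-1/2}\Gamma(d/2)\Gamma((d+1)/2)$ together with $d+e=2c+1$. The case $m=1$ is $1-c/(de)\cdot 2/1\cdot\frac12\ldots$, a one-line check.

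An alternative for Step 1 is Zeilberger/WZ certification. I would use it if the recurrence bookkeeping is messy.

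\textbf{Step 2 (extension in $a$).} Set $a=-z$. Both sides, divided suitably by Gamma factors, are analytic in $z$ for $\Re(c)>0$. They agree at $z=m\in\ZZ_{\ge0}$, and the left side is invariant under $z\mapsto -1-z$, as is the right side. To apply Carlson's theorem I need exponential-type bounds of order $<\pi$ on $\Re z\ge0$.

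For the left side, the term-wise bound $(a)_n(1-a)_n=\frac{\Gamma(n+a)\Gamma(n+1-a)}{\Gamma(a)\Gamma(1-a)}$ introduces $\sin(\pi a)$-type growth $e^{\pi|\Im z|}$. That is borderline, so I would first multiply both sides by $1/\Gamma$ factors, or divide by $\sin$, to tame it.

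The right side's Gamma quotient grows at most like $e^{(\pi/2)|\Im z|}$ times polynomial factors, by Stirling.

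\textbf{Main obstacle.} The hard part is getting the exponential-type estimate below $\pi$ for the left side uniformly in $z$, so that Carlson's theorem applies. If this fails, I would instead fall back on the classical derivation: apply Thomae's/Kummer's two-term ${}_3F_2$ transformation to rewrite the left side as a Watson-type ${}_3F_2$, then invoke Watson's formula, itself derived from Gauss's ${}_2F_1(1)$ sum together with Clausen's formula.
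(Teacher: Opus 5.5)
The paper gives no proof of this statement: it is quoted from Whipple, so your argument has to stand on its own. Your Carlson route has a genuine gap at Step 2 that cannot be patched; your fallback is the right proof and should be the main argument.

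\textbf{Step 1 is sound.} For $m\ge1$, the Hahn-type three-term recurrence of $f_m$ has, up to normalization, middle coefficient $\frac{d+e-1}{2}-c$. This vanishes exactly when $d+e=2c+1$, leaving $(m+d)(m+e)f_{m+1}=(m+1-d)(m+1-e)f_{m-1}$, which is the two-step ratio of the right side. This is why a right side whose consecutive values are not rationally related can still satisfy the recurrence. Your $m=1$ check should read $1-\frac{2c}{de}=\frac{(d-1)(e-1)}{de}$.

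\textbf{Step 2 fails.} Your growth claim for the right side is wrong. At $z=iy$, each of the four reciprocal Gamma factors has imaginary part $\pm y/2$ and contributes $e^{\pi|y|/4}$, so $|\mathrm{RHS}(iy)|\asymp|y|^{-2\Re c}e^{\pi|y|}$. Equivalently, by reflection the right side is a slowly varying Gamma ratio times $\sin\frac{\pi(d-z)}2\sin\frac{\pi(e-z)}2$.

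Since the left side equals the right side, it too has type exactly $\pi$ on the imaginary axis. So the bound you call the main obstacle is false, not merely hard. For $\mathrm{LHS}-\mathrm{RHS}$ nothing better than $O(e^{\pi|y|})$ is available, which is precisely the case Carlson's theorem excludes.

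The extra polynomial decay and the symmetry $z\mapsto-1-z$ do not rescue it. The function $\sin(\pi z)/\bigl((z+\frac12)^2+1\bigr)$ vanishes at every integer, is invariant under $z\mapsto-1-z$, is analytic in $\Re z\ge0$, and is $O(|y|^{-2}e^{\pi|y|})$ on the imaginary axis. Your proposed remedies do not help either: multiplying by $1/\Gamma$ factors only raises the type, and dividing by $\sin\pi z$ removes the interpolation nodes.

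\textbf{The fallback works directly.} Both sides are symmetric under $a\leftrightarrow b$, and $\Re a+\Re b=1$, so assume $\Re a\ge\frac12$. Thomae's relation with $s=d+e-a-b-c=c$ gives
\[
\sum_{n\ge0}\frac{(a,b,c)_n}{n!\,(d,e)_n}=\frac{\Gamma(d)\Gamma(e)\Gamma(c)}{\Gamma(a)\Gamma(c+1-a)\Gamma(2c)}\sum_{n\ge0}\frac{(d-a,e-a,c)_n}{n!\,(c+1-a,2c)_n},
\]
valid since $\Re c>0$ and $\Re a>0$.

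The new series is Watson's (Lemma~\ref{lem:lgr}(iii)) with $A=d-a$, $B=e-a$, $C=c$, because $\frac{A+B+1}2=c+1-a$. Its convergence condition $\Re(2C-A-B)=\Re(2a-1)>-1$ holds.

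Watson's Gamma factors then match up as follows:
\begin{itemize}
\item In the denominator, $\Gamma(\frac{A+1}2)=\Gamma(\frac{b+d}2)$, $\Gamma(\frac{B+1}2)=\Gamma(\frac{b+e}2)$, $\Gamma(C-\frac{A-1}2)=\Gamma(\frac{a+e}2)$ and $\Gamma(C-\frac{B-1}2)=\Gamma(\frac{a+d}2)$.
\item In the numerator, $\Gamma(\frac{A+B+1}2)=\Gamma(c+1-a)$ and $\Gamma(C-\frac{A+B-1}2)=\Gamma(a)$ cancel the prefactor.
\end{itemize}
The duplication formula $\Gamma(c)\Gamma(c+\frac12)=2^{1-2c}\sqrt\pi\,\Gamma(2c)$ then gives exactly Whipple's right side.

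One caution: Watson's formula has to be cited (or proved) in full generality. Clausen's formula at $x=1$ combined with Gauss's sum only yields the case $2C=A+B$, which here is the single value $a=\frac12$.
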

We prove the following double-sum analogue of this formula.
\begin{Thm}\label{thm:double_whipple}
Let $a,b,c,d,e$ be complex numbers with $\Re(c) < 1$, and assume that all the expressions below are well-defined.
\begin{enumerate}[(i)]
\item If $a+b=1$ and $d+e=2c+1$, then the identity
\begin{align*}
\sum_{0\leq n\leq m}\frac{(a,b)_n(c)_{n+1}}{n!(d,e)_{n+1}}\frac{m!(d,e)_m}{(a,b,c)_{m+1}}&=\frac{2c}{\sin\pi c}\sum_{0\leq n}\frac{(-1)^n(2n+1)\cos\frac{\pi(d-n)}2\cos\frac{\pi(e-n)}2}{(n+a)(n+b)(n+d)(n+1-d)(n+e)(n+1-e)}
\end{align*}
holds.
\item If $a+b=1$ and $d+e=2c$, then the identity
\begin{align*}
&\sum_{0\leq n\leq m}\frac{(n+m+2c)(a,b,c)_n}{n!(d,e)_{n+1}}\frac{m!(d,e)_m}{(a,b,c)_{m+1}}\\
&=\frac 1{d-e}\sum_{0\leq n}\frac{2n+1}{(n+a)(n+b)}\left(\frac 1{(n+e)(n+1-e)}-\frac 1{(n+d)(n+1-d)}\right)\\
&\qquad+\frac{\sin\frac{\pi(d-e)}2}{(d-e)\sin\pi c}\sum_{0\leq n}\frac{(-1)^n(2n+1)}{(n+a)(n+b)}\left(\frac 1{(n+e)(n+1-e)}+\frac 1{(n+d)(n+1-d)}\right)
\end{align*}
holds.
\end{enumerate}
\end{Thm}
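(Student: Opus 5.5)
We begin by rewriting the left-hand side of (i) as a single sum over $m$ of a partial sum over $n$. Put $A_n=\frac{(a,b)_n(c)_{n+1}}{n!(d,e)_{n+1}}$ and $B_m=\frac{m!(d,e)_m}{(a,b,c)_{m+1}}$; the left side is then $\sum_{m\ge0}B_m\sum_{n\le m}A_n$. Under $a+b=1$ and $d+e=2c+1$ the inner partial sum has closed form. It is the truncated Whipple sum, and such sums are Gosper-summable after a shift. Concretely, we look for a rational function $R(n)$ with
\begin{align*}
\sum_{n=0}^m A_n = T - R(m)\,\frac{(a,b)_{m+1}(c)_{m+1}}{m!(d,e)_{m+1}},
\end{align*}
where $T$ is the full sum $\sum_{n\ge0}A_n$. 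Because $\Re(c)<1$, the series $T$ need not converge. We therefore plan instead to write $\sum_{n\le m}A_n$ by reversing the roles of the indices: we use the complementary "tail" $\sum_{n\ge m+1}$ when it converges, and otherwise argue by analytic continuation in $c$. Multiplying by $B_m$ makes the products of Pochhammer symbols collapse to ratios of the type $\frac{(a)_{m+1}(b)_{m+1}}{(a)_{m+1}(b)_{m+1}}$. This should leave a sum over $m$ of a rational function of $m$ plus a term involving $T\,B_m$.

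A cleaner route is through partial fractions and the Euler reflection formula. We expand $A_n B_m$ for fixed $k=m-n\ge0$ as a function of $n$. An alternative is to introduce the generating integral
\begin{align*}
\frac{m!\,(d)_m}{(c)_{m+1}\ldots}
\end{align*}
via Beta integrals, $B_m\propto\int_0^1 t^{m}(1-t)^{\cdots}dt$, which converts the double sum into an integral of a ${}_3F_2$-type kernel. Whipple's formula (Theorem \ref{thm:whipple}) evaluated at shifted parameters then gives the inner sum as a Gamma quotient. The products $\Gamma(\frac{a+d}2)\cdots$ pair up, since $a+b=1$, into $\Gamma(x)\Gamma(1-x)$-type factors. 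These become the $\cos\frac{\pi(d-n)}2$ terms and the prefactor $\frac{2c}{\sin\pi c}$ through reflection; the $\sin\pi c$ arises from $\Gamma(c)\Gamma(1-c)$.

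The concrete plan is as follows. (1) Prove the identity for parameters where everything converges absolutely, with $\Re(c)$ sufficiently negative. (2) Interchange the order of summation, summing over $m\ge n$ first. The inner sum $\sum_{m\ge n}B_m$ is a ${}_3F_2$ tail with balanced-type parameters, and we evaluate it by telescoping. The key check is that with $d+e=2c+1$ and $a+b=1$ the term ratio admits a Gosper certificate, with the antidifference equal to $B_m$ times a linear factor. (3) Substitute this and decompose the resulting single sum in partial fractions in $n$. This yields terms $\frac{1}{(n+a)(n+b)}$ times Gamma ratios $\frac{\Gamma(n+d)\Gamma(n+e)}{\Gamma(n+\cdots)}$, which we rewrite using reflection into the trigonometric form on the right. (4) Extend to $\Re(c)<1$ by analytic continuation, since both sides are meromorphic and the right side converges for all parameters. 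Part (ii) will be handled in exactly the same way. The factor $(n+m+2c)$ is precisely what makes the inner telescoping work when $d+e=2c$. Summing parity-split contributions then produces the two series: the non-alternating one from the rational part, and the alternating one from the $\sin$-reflection part with $\sin\frac{\pi(d-e)}2$.

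The main obstacle is step (2)–(3): finding the correct antidifference and matching the leftover single sum exactly with the stated trigonometric series. In particular, the boundary terms as $m\to\infty$ produce $\Gamma$-quotients whose reflection must generate $\cos\frac{\pi(d-n)}2\cos\frac{\pi(e-n)}2$. We would first attempt this in the special case $d=e=c+\tfrac12$ to pin down constants, and then generalize.
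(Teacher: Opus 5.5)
\textbf{There is a genuine gap, and it is your step (2).} The tail $\sum_{m\ge n}B_m$ is \emph{not} Gosper-summable. With $t_m=B_m$ the term ratio is $\frac{(m+1)(m+d)(m+e)}{(m+1+a)(m+1+b)(m+1+c)}$, so for generic parameters Gosper's equation is
\[
(m+1)(m+d)(m+e)f(m+1)-(m+a)(m+b)(m+c)f(m)=1 .
\]
Let $f$ have degree $D$ and leading coefficient $f_D$. Since $(1+d+e)-(a+b+c)=c+1$, the coefficient of $m^{D+2}$ on the left is $f_D(D+c+1)$. So a polynomial solution exists only for $c\in\{-1,-2,\dots\}$.

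The same computation applied to your first idea (Gosper on $\sum_{n\le m}A_n$) requires $c-1\in\ZZ_{\ge0}$, which $\Re(c)<1$ excludes. In fact
\[
\sum_{m\ge n}B_m=B_n\,{}_4F_3(1,n+1,n+d,n+e;n+1+a,n+1+b,n+1+c;1),
\]
and no classical summation theorem applies to it. The factors $(a,b)_n$ in $A_n$ and $(a,b)_{m+1}$ in $B_m$ couple the two indices, so neither one-variable partial sum is a Whipple- or Watson-type series.

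Your step (3) is also inconsistent with (2). Since $A_nB_n=\frac1{(n+a)(n+b)(n+d)(n+e)}$, a rational certificate would leave a rational-function sum with no Gamma ratios to reflect. The Beta-integral alternative is too incomplete to check. In (ii), the factor $(n+m+2c)$ does not make anything telescope either.

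\textbf{The missing idea is to decouple $n$ and $m$ using $a+b=1$.} Put $a=\frac12+s$. Then $\frac{(a,b)_n}{(a,b)_{m+1}}=\prod_{k=n}^m\frac1{(k+\frac12)^2-s^2}$, and partial fractions in $s^2$ give
\[
\frac{(a,b)_n}{(a,b)_{m+1}}=\sum_{k=n}^m\frac{(-1)^k(2k+1)}{(k+a)(k+b)}\,\frac{(-1)^n(n+k)!}{(k-n)!\,(m-k)!\,(m+k+1)!}.
\]
Swapping the order of summation turns the left side of (i) into $\sum_k\frac{(-1)^k(2k+1)}{(k+a)(k+b)}S_kT_k$, where
\[
S_k=\sum_{n\le k}\frac{(c)_{n+1}}{n!(d,e)_{n+1}}\frac{(-1)^n(n+k)!}{(k-n)!}=\frac{c}{de}\,{}_3F_2(-k,k+1,c+1;d+1,e+1;1)
\]
and $T_k=\sum_{m\ge k}\frac{m!(d,e)_m}{(c)_{m+1}(m-k)!(m+k+1)!}$.

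\begin{itemize}
\item $S_k$ terminates and is summed by Whipple's formula, since $(d+1)+(e+1)=2(c+1)+1$.
\item After $m=k+j$, $T_k$ is a multiple of ${}_3F_2(k+1,d+k,e+k;2k+2,c+k+1;1)$. This is summed by Watson's formula, and it converges precisely because $\Re(c)<1$.
\item Duplication and reflection then collapse the product to
\[
S_kT_k=\frac{2c}{\sin\pi c}\,\frac{\cos\frac{\pi(d-k)}2\cos\frac{\pi(e-k)}2}{(k+d)(k+1-d)(k+e)(k+1-e)}.
\]
\end{itemize}

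For (ii), first split
\[
(n+m+2c)\frac{(c)_n}{(c)_{m+1}}=\frac{(c)_{n+1}}{(c)_{m+1}}+\frac{(c)_n}{(c)_m},
\]
then apply the same decoupling. The resulting inner sums are evaluated by the Lavoie--Grondin--Rathie contiguous extensions of Whipple's and Watson's theorems, whose values depend on the parity of $k$. Splitting the $k$-sum by parity gives the non-alternating and alternating series on the right.
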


Igarashi \cite{i} introduced the following parametrized multiple zeta series.
\begin{align*}
Z(k_1,\dots,k_r;\alpha):=\sum_{0\leq n_1<\cdots<n_r}\frac{(\alpha)_{n_1}}{n_1!}\frac{1}{(n_1+\alpha)^{k_1}\cdots (n_r+\alpha)^{k_r}}\frac{n_r!}{(\alpha)_{n_r}},
\end{align*}
where $(k_1,\dots,k_r)\in (\ZZ_{>0})^r, k_r\geq 2$ and $\Re(\alpha)>0$. Set $Z(k_1,\dots,k_r)\coloneqq Z\left(k_1,\dots,k_r;\frac 12\right)$. As an application of Theorem \ref{thm:double_whipple}, we establish the following restricted sum formula.
\begin{Thm}\label{thm:main}
For positive integers $k\geq r\geq 2$, the identity
\begin{align*}
&\sum_{\substack{1\leq k_1,\dots,k_r\\k_1+\cdots+k_r=k}}(Z(2k_1,\dots,2k_r)+Z(2k_1-1,2k_2,\dots,2k_{r-1},2k_r+1))\\
&=\frac 1{2^{2r-3}}\sum_{j=0}^{\lfloor{\frac{r-1}2}\rfloor}\frac{(-4\pi^2)^j}{(2j)!}\binom{2r-2j-2}{r-1}Z(2k-2j).
\end{align*}
holds.
\end{Thm}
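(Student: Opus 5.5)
Our approach is generating functions. We specialize Theorem \ref{thm:double_whipple} to $a=b=\frac12$, take $c$, $d$, $e$ near the appropriate values, and expand both sides as power series in the small parameters. The coefficients of these series will then be compared.

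\textbf{Setting up the left-hand side.} In part (i) we take $c=\frac12$ shifted suitably. We write $d=\frac12+x+y$ and $e=\frac12-x+y$ (or a symmetric variant) so that $d+e=2c+1$ forces $c=\frac12+y$. The left-hand side becomes a double sum over $0\le n\le m$ of ratios of Pochhammer symbols. We will use the expansions
\[
\frac{(\tfrac12)_n}{(\tfrac12+t)_n}=\prod_{j<n}\Bigl(1+\frac{t}{j+\tfrac12}\Bigr)^{-1},
\qquad
\frac{(d,e)_m}{(\tfrac12)_m^2}=\prod_{j<m}\Bigl(1-\frac{x^2}{(j+\tfrac12+y)^2}\Bigr)\cdots.
\]
Expanding the products gives nested sums. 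In these nested sums, factors of the form $\sum_k h_k(\cdots)x^{2k}$ produce the even arguments $2k_i$ of $Z$. Summing over all ways of distributing the exponents, the coefficient of $x^{2k-2r}$ (or $x^{2k}$) should generate $\sum_{k_1+\cdots+k_r=k}Z(2k_1,\dots,2k_r)$. The boundary factors $(c)_{n+1}/(d,e)_{n+1}$ and $(c)_{m+1}$ supply the odd shifts $2k_1-1$ and $2k_r+1$, which yield the second family $Z(2k_1-1,2k_2,\dots,2k_r+1)$.

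To get an $r$-fold iterated sum from a double sum, I would instead expand the middle ratio $\frac{(d,e)_m}{(d,e)_n}$ over $n\le j<m$ in a product of the form $\prod(1-x^2/(j+\frac12)^2)^{-1}$. The depth $r$ then comes from picking $r-2$ intermediate indices. This means we need a second variable $u$ counting the depth. I would therefore introduce the generating function $\sum_{r,k}(\text{LHS}_{r,k})\,u^{2r}x^{2k}$ and match it with $d,e$ chosen as $\frac12\pm\sqrt{x^2\pm u^2}$-type parameters, so that the products become $\prod_j\bigl(1-\frac{u^2}{(j+\frac12)^2-x^2}\bigr)$.

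\textbf{Right-hand side.} With $a=b=\frac12$ we have $(n+a)(n+b)=(n+\frac12)^2$. Expanding $\frac{2n+1}{(n+d)(n+1-d)}$ as a geometric series in $(d-\frac12)^2/(n+\frac12)^2$, together with the factor $\cos\frac{\pi(d-n)}2\cos\frac{\pi(e-n)}2$ and the prefactor $\frac{2c}{\sin\pi c}$, gives single sums $\sum_n\frac{(\pm1)^n}{(n+\frac12)^{2k-2j}}$. These single sums are $Z(2k-2j)$, up to powers of $2$. The alternating parity combines with the cosines, since $\cos\frac{\pi(d-n)}{2}\cos\frac{\pi(e-n)}{2}$ splits into $\frac12[\cos\frac{\pi(d-e)}2+(-1)^n\cos\frac{\pi(d+e-2n)}2]$, and this removes the $(-1)^n$ in one term. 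The Taylor coefficients of $\cos$ produce the factors $\frac{(-4\pi^2)^j}{(2j)!}$, after rescaling by $\pi/2$ and accounting for powers of $2$. The binomial coefficient $\binom{2r-2j-2}{r-1}$ should arise from extracting the $u^{2r}$ coefficient of something like $(1-4u^2)^{-1/2}$-type expansions. Part (ii) of the theorem is used in the same way to handle the complementary combination if needed.

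\textbf{Main obstacle.} The hard step is the first one: choosing the specialization of $(c,d,e)$ as functions of two formal variables so that the left-hand double sum expands \emph{exactly} into $\sum Z(2k_1,\dots,2k_r)+Z(2k_1-1,\dots,2k_r+1)$ with the correct weights. I would first test small cases, $r=2$ with $k=2,3$, numerically, in order to pin down the normalization $2^{-(2r-3)}$. I would then verify convergence and the interchange of summations, using $\Re(c)<1$.
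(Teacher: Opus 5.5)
\textbf{There is a genuine gap.} The specialization you start from cannot work, and the step you call the main obstacle is where the whole proof lives.

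\textbf{The specialization.} With $a=b=\tfrac12$ the factor $(a,b)_n/(a,b)_{m+1}=\prod_{n\le k\le m}(k+\tfrac12)^{-2}$ has no free parameter, and $d,e$ enter only polynomially through $(k+d)(k+e)$. So every index carries exponent exactly $2$, and at best you recover $Z(\{2\}^r)+Z(1,\{2\}^{r-2},3)$, i.e.\ the case $k=r$. Your own target product $\prod_j\bigl(1-\frac{u^2}{(j+\frac12)^2-x^2}\bigr)$ needs denominators $(j+\tfrac12)^2-x^2$, and these can only come from $a,b=\tfrac12\mp x$.

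The paper takes $a=\tfrac12-s$, $b=\tfrac12+s$. Each index then contributes $\frac{1}{(n_i+\frac12)^2-s^2}=\sum_{k_i\ge1}s^{2k_i-2}(n_i+\tfrac12)^{-2k_i}$. The intermediate indices are generated by $\prod_{n<k<m}\bigl(1+\frac{s^2t^2}{(k+\frac12)^2-s^2}\bigr)$, which forces $d,e=\tfrac12\mp s\sqrt{1-t^2}$.

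The idea you are missing is why the theorem concerns the \emph{sum} of the two families. The summand of $Z(2k_1,\dots,2k_r)+Z(2k_1-1,2k_2,\dots,2k_r+1)$ equals that of $Z(2k_1,\dots,2k_r)$ times $1+\frac{n_1+1/2}{n_r+1/2}=\frac{n_1+n_r+1}{n_r+1/2}$. This is exactly the weight $(n+m+2c)$ of part (ii) of Theorem~\ref{thm:double_whipple}, with $c=\tfrac12$ and $d+e=1=2c$.

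Part (i) is of no use here. Igarashi's weight forces $c=\tfrac12$, hence $d+e=2$, and then $(k+d)(k+e)$ can never equal $(k+\tfrac12)^2-s^2(1-t^2)$. Incidentally, with your $d,e$ the relation $d+e=2c+1$ gives $c=y$, not $\tfrac12+y$.

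You also need to subtract the diagonal terms $n=m$. They are present in the double sum but absent from the generating function, where $n_1<n_r$. They equal $2\sum_n\frac{1}{((n+\frac12)^2-s^2)((n+\frac12)^2-s^2(1-t^2))}$.

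\textbf{The right-hand side.} Your plan here also fails as stated. Because of the factor $2n+1$ in the numerator, term-by-term expansion produces sums over \emph{odd} powers of $n+\tfrac12$: alternating ones, and non-alternating ones of odd-zeta type. None of these is literally $Z(2k-2j)$ ``up to powers of $2$''.

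In the paper, $d=1-e$ makes the non-alternating sum in (ii) vanish identically. The rest is summed in closed form using $\sum_n\frac{(-1)^n(2n+1)}{(n+\frac12)^2-X^2}=\frac{\pi}{\cos\pi X}$ and $\sum_n\frac{1}{(n+\frac12)^2-X^2}=\frac{\pi\tan\pi X}{2X}$. This gives the generating function $\frac{\pi\sin(\pi s\sqrt{1-t^2})}{s^3t^2\sqrt{1-t^2}\cos\pi s}-\frac{\pi\tan\pi s}{s^3t^2}$.

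The coefficients $\frac{(-4\pi^2)^j}{(2j)!}\binom{2r-2j-2}{r-1}$ then come from Hoffman's expansion (Lemma~\ref{lem:hoffman}) of $\frac{\sin(\pi\sqrt X\sqrt{1-Y})}{\pi\sqrt X\sqrt{1-Y}}$ in powers of $Y$, through the polynomial $Q_{r-1}$. They do not come from a $(1-4u^2)^{-1/2}$-type expansion. The values $Z(2k-2j)$ appear only on re-expanding $\frac{\tan\pi s}{\pi s}=\frac{2}{\pi^2}\sum_{k\ge1}Z(2k)s^{2k-2}$, and the $P_{r-1}$ part affects only coefficients with $k<r$.

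Your proposal is missing all of the following: the correct specialization in part (ii), the diagonal correction, the closed-form evaluation, and Hoffman's lemma. Without them it does not reach the stated identity.
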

This result is analogous to the following restricted sum formulas for multiple zeta values and multiple $t$-values, established by Hoffman \cite{h} and Zhao \cite{z}, respectively:
\begin{align*}
\sum_{\substack{1\leq k_1,\dots,k_r\\k_1+\cdots+k_r=k}}\zeta(2k_1,\dots,2k_r)&=\frac 1{2^{2r-2}}\sum_{j=0}^{\lfloor\frac{r-1}2\rfloor}\frac{(-4\pi^2)^j}{(2j+1)!}\binom{2r-2j-1}r\zeta(2k-2j),\\
\sum_{\substack{1\leq k_1,\dots,k_r\\k_1+\cdots+k_r=k}}t(2k_1,\dots,2k_r)&=\frac 1{2^{2r-2}r}\sum_{j=0}^{\lfloor\frac{r-1}2\rfloor}\frac{(-\pi^2)^j}{(2j)!}\binom{2r-2j-2}{r-1}t(2k-2j),
\end{align*}
where
\begin{align*}
\zeta(k_1,\dots,k_r)&:=\sum_{0<n_1<\cdots<n_r}\frac 1{n_1^{k_1}\cdots n_r^{k_r}}\\
t(k_1,\dots,k_r)&:=\sum_{0<n_1<\cdots<n_r}\frac 1{(2n_1-1)^{k_1}\cdots (2n_r-1)^{k_r}}.
\end{align*}
Combining Zhao's formula with the identity $Z(k)=2^kt(k)$ for $k\geq 2$, we see that Theorem 1.3 is equivalent to the following identity.
\begin{Thm}
For positive integers $k\geq r\geq 2$, the identity
\begin{align*}
&\sum_{\substack{1\leq k_1,\dots,k_r\\k_1+\cdots+k_r=k}}(Z(2k_1,\dots,2k_r)+Z(2k_1-1,2k_2,\dots,2k_{r-1},2k_r+1))\\
&=2^{2k+1}r\sum_{\substack{1\leq k_1,\dots,k_r\\k_1+\cdots+k_r=k}}t(2k_1,\dots,2k_r)
\end{align*}
holds.
\end{Thm}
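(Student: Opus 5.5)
The plan is to derive the identity from the generating-function form of Theorem \ref{thm:double_whipple}(ii), specialised at $c=\tfrac12$, and then to read off the right-hand side by comparison with Zhao's formula. I will parametrise
\[
a=\tfrac12+w,\qquad b=\tfrac12-w,\qquad d=\tfrac12+u,\qquad e=\tfrac12-u,\qquad u=w\sqrt{1-z},
\]
so that $a+b=1$ and $d+e=1=2c$. For these values the $j$-th factors of the Pochhammer symbols combine as
\[
(a)_j(b)_j\text{-factor: }\bigl(j+\tfrac12\bigr)^2-w^2,\qquad (d)_j(e)_j\text{-factor: }\bigl(j+\tfrac12\bigr)^2-(1-z)w^2 .
\]

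\textbf{Step 1: expand the left side of (ii) into Igarashi series.} The summand is built from the ratio $(d,e)_m(a,b)_n/\bigl((a,b)_{m+1}(d,e)_{n+1}\bigr)$ together with the endpoint weights $(\tfrac12)_n/n!$ and $m!/(\tfrac12)_{m+1}$. The ratio is a product, over the indices between $n$ and $m$, of factors of the form
\[
\frac{(j+\frac12)^2-(1-z)w^2}{(j+\frac12)^2-w^2}=1+\frac{zw^2}{(j+\frac12)^2-w^2},
\]
with the boundary factors $1/\bigl((n+\tfrac12)^2-\cdots\bigr)$ and $1/\bigl((m+\tfrac12)^2-\cdots\bigr)$ left over. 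This is Hoffman's device for restricted sums. I would expand the product into a sum over chains $n=n_1<\cdots<n_r=m$, then expand each factor $1/\bigl((n_i+\tfrac12)^2-w^2\bigr)$ geometrically in $w^2$. The coefficient of $z^{r-1}w^{2k-2}$ (up to normalisation) should then become the symmetric sum $\sum Z(2k_1,\dots,2k_r)$ with $\alpha=\tfrac12$.

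The factor $(n+m+2c)=(n_1+\tfrac12)+(n_r+\tfrac12)$ is what produces the mixed terms. Splitting it as
\[
\bigl(n_1+\tfrac12\bigr)+\bigl(n_r+\tfrac12\bigr)
\]
lowers the exponent of one endpoint by one. After symmetrising the boundary factor, which is not quite $1/(\cdot)^2$ because of the $(d,e)$ denominators, I expect exactly the combination
\[
Z(2k_1,\dots,2k_r)+Z(2k_1-1,2k_2,\dots,2k_r+1).
\]
I expect this bookkeeping to be the main obstacle: checking that the two endpoint terms produce exactly the stated sum, with no leftover boundary corrections. I would first test the expansion at $r=2$ against small $k$ numerically before committing to the general indexing.

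\textbf{Step 2: expand the right side of (ii).} Here $d-e=2u$ and $\sin\pi c=1$, and
\[
(n+d)(n+1-d)=\bigl(n+\tfrac12\bigr)^2-u^2 .
\]
Partial fractions in $(n+\tfrac12)^2$ then give combinations of single sums
\[
\sum_{n\ge 0}\frac{(2n+1)}{\bigl((n+\frac12)^2-w^2\bigr)\bigl((n+\frac12)^2-u^2\bigr)},
\]
together with an alternating companion. The alternating sums collapse, since $(-1)^n(2n+1)/\bigl((n+\tfrac12)^2-x^2\bigr)$ telescopes against its partner to $\pi\sec(\pi x)$-type closed forms multiplied by $\sin(\pi u)$. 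The non-alternating sums are generating functions of $t(2k)$, namely
\[
\sum_{k} 2^{2k}t(2k)x^{2k-2}.
\]

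\textbf{Step 3: extract coefficients and compare.} I would take the coefficient of $z^{r-1}w^{2k}$. On the right this yields an explicit linear combination of $\pi^{2j}t(2k-2j)$ with binomial weights coming from the expansion of $\sqrt{1-z}$ powers and of $\sin(\pi u)/u$. Rather than simplifying these weights directly, I would compare them with Zhao's formula for $\sum t(2k_1,\dots,2k_r)$ quoted above. Equivalently, I would check that the resulting coefficients equal $\frac{1}{2^{2r-3}}\frac{(-4\pi^2)^j}{(2j)!}\binom{2r-2j-2}{r-1}$ after using $Z(m)=2^m t(m)$.

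This establishes Theorem 1.3, and hence the stated identity via Zhao's formula. The binomial identity needed in Step 3 is a Vandermonde-type convolution, which I would verify by comparing generating functions in $z$.
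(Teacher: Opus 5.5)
Your strategy is the paper's: specialise Theorem~\ref{thm:double_whipple}(ii) at $c=\frac12$, $a,b=\frac12\pm w$, $d,e=\frac12\pm u$ with $u=w\sqrt{1-z}$, expand Hoffman-style, evaluate with the trigonometric partial-fraction series, and compare coefficients (of $z^{r-2}w^{2k-4}$, not $z^{r-1}w^{2k-2}$). However, the bookkeeping you predict in Steps 1--2 is wrong, and followed literally it gives a false generating function.

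Only the terms with $n<m$ are chain terms. For $n<m$ the ratio equals
\[
\frac{1}{\bigl((n+\frac12)^2-w^2\bigr)\bigl((m+\frac12)^2-w^2\bigr)}\prod_{n<j<m}\Bigl(1+\frac{zw^2}{(j+\frac12)^2-w^2}\Bigr),
\]
with no $(d,e)$-factor at the endpoints. The full diagonal summand $n=m$, by contrast, is $2\bigl((n+\frac12)^2-w^2\bigr)^{-1}\bigl((n+\frac12)^2-u^2\bigr)^{-1}$. This is not a chain term, and since $u^2=(1-z)w^2$ it feeds into every power of $z$. So there is a leftover correction, and it must be subtracted before comparing coefficients.

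Your Step 2 then puts the $t(2k)$ in the wrong place. Because $d+e=1$ forces $1-e=d$, the non-alternating sum on the right of (ii) is identically zero. (A sum with numerator $2n+1$ would give odd-power, digamma-type sums anyway, not $t(2k)$.) The right side of (ii) is just $\frac{\pi\sin\pi u}{zw^2u}(\sec\pi w-\sec\pi u)$. The diagonal correction is $-\frac{\pi\tan\pi w}{zw^3}+\frac{\pi\tan\pi u}{zw^2u}$, and its second term cancels the $\sec\pi u$ term. The correct generating function is therefore
\[
G=\frac{\pi\sin(\pi w\sqrt{1-z})}{zw^3\sqrt{1-z}\cos\pi w}-\frac{\pi\tan\pi w}{zw^3}.
\]
Without the diagonal you would end up with $\tan\pi u$ in place of $\tan\pi w$, and no choice of binomial weights in Step 3 would match.

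Step 3 is also underspecified. Extracting the $t(2k-2j)$ from $G$ requires writing the $z^k$-coefficient of $\sin(\pi u)/(\pi u)$ as $P_k(\pi^2w^2)\cos\pi w+Q_k(\pi^2w^2)\frac{\sin\pi w}{\pi w}$. That is Hoffman's lemma, which the paper uses to obtain Theorem~\ref{thm:main} before invoking Zhao; a Vandermonde-type convolution by itself does not supply it. For the statement as posed you can actually skip both Hoffman and Zhao. Set
\[
F=\prod_{n\ge0}\Bigl(1+\frac{zw^2}{(n+\frac12)^2-w^2}\Bigr)=\frac{\cos(\pi w\sqrt{1-z})}{\cos\pi w}=\sum_{r,k\ge0}z^rw^{2k}2^{2k}\sum_{\substack{k_1,\dots,k_r\ge1\\k_1+\cdots+k_r=k}}t(2k_1,\dots,2k_r).
\]
Then $G=\frac{2}{zw^4}\bigl(\partial_zF-\partial_zF|_{z=0}\bigr)$, and comparing coefficients of $z^{r-2}w^{2k-4}$ gives the factor $2^{2k+1}r$ immediately.
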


Setting $k=r$ and using the following identity (see Hoffman \cite{h2})
\[
t(\{2\}^r)=\frac{\pi^{2r}}{2^{2r}(2r)!},
\]
we obtain the following corollary.
\begin{Cor}
Let $r$ be a positive integer greater than or equal to $2$. Then the following identity holds:
\begin{align*}
&Z(\{2\}^r)+Z(1,\{2\}^{r-2},3)=\frac{\pi^{2r}}{(2r-1)!}.
\end{align*}
Here, $\{2\}^n$ denotes a string of $n$ twos.
\end{Cor}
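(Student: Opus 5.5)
The corollary is the case $k=r$ of the last theorem, combined with Hoffman's evaluation of $t(\{2\}^r)$. So the plan is to specialize and then evaluate.

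First I set $k=r$ in the theorem preceding the corollary. The constraint $k_1+\cdots+k_r=r$ with every $k_i\geq 1$ has only one solution, namely $k_1=\cdots=k_r=1$. So each side reduces to a single term. On the left we get
\[
Z(\{2\}^r)+Z(1,\{2\}^{r-2},3),
\]
where the second argument string comes from $(2k_1-1,2k_2,\dots,2k_{r-1},2k_r+1)=(1,2,\dots,2,3)$. Because $r\geq 2$, the first and last slots are distinct and the string of twos between them has length $r-2$. The final entry is $3\geq 2$, so the series converges and the quantity is defined.

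On the right we get $2^{2r+1}r\,t(\{2\}^r)$. Substituting Hoffman's value $t(\{2\}^r)=\pi^{2r}/(2^{2r}(2r)!)$ gives
\[
2^{2r+1}r\cdot\frac{\pi^{2r}}{2^{2r}(2r)!}=\frac{2r\,\pi^{2r}}{(2r)!}=\frac{\pi^{2r}}{(2r-1)!}.
\]
This is exactly the claimed right-hand side.

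Nothing here is a real obstacle. The only things to check are the uniqueness of the composition and the bookkeeping of the argument string in the second $Z$, both of which are immediate. If one wants to avoid the equivalent reformulation, one can apply Theorem \ref{thm:main} directly with $k=r$ together with Zhao's formula, but that only reproduces the same computation.
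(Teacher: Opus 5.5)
Your proposal is correct and follows exactly the paper's route: set $k=r$ in the reformulated theorem, where the only composition is $k_1=\cdots=k_r=1$, and insert Hoffman's value $t(\{2\}^r)=\pi^{2r}/(2^{2r}(2r)!)$. This gives $2^{2r+1}r\,t(\{2\}^r)=\pi^{2r}/(2r-1)!$.
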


This paper is organized as follows. In Section 2, we prove Theorem \ref{thm:double_whipple} using generalizations of Whipple's and Watson's summation formulas due to Lavoie, Grondin, and Rathie \cite{lgr,lgr2}. In Section 3, we apply Theorem \ref{thm:double_whipple} to prove Theorem \ref{thm:main}.

\section{Proof of Theorem \ref{thm:double_whipple}}

Lavoie, Grondin, and Rathie \cite{lgr, lgr2} obtained generalizations of Whipple's and Watson's summation formulas.
\begin{Lem}[Watson \cite{wa}; Lavoie--Grondin--Rathie \cite{lgr,lgr2}]\label{lem:lgr}
\begin{enumerate}[(i)]
\item If $a+b=1$, $d+e=2c$, and $\Re(c)>1$, then the identity
\begin{align*}
\sum_{0\leq n}\frac{(a,b,c)_n}{n!(d,e)_n}&=\frac{2^{2-2c}\pi\Gamma(d)\Gamma(e)}{c-1}\left(\frac 1{\Gamma\left(\frac{d-a}2\right)\Gamma\left(\frac{d-b}2\right)\Gamma\left(\frac{e+a}2\right)\Gamma\left(\frac{e+b}2\right)}+\frac 1{\Gamma\left(\frac{e-a}2\right)\Gamma\left(\frac{e-b}2\right)\Gamma\left(\frac{d+a}2\right)\Gamma\left(\frac{d+b}2\right)}\right)
\end{align*}
holds.
\item If $a+b=1$, $d+e=2c+2$, and $\Re(c)>-1$, then the identity
\begin{align*}
\sum_{0\leq n}\frac{(a,b,c)_n}{n!(d,e)_n}&=\frac{2^{1-2c}\pi\Gamma(d)\Gamma(e)}{d-e}\left(\frac 1{\Gamma\left(\frac{d-a}2\right)\Gamma\left(\frac{d-b}2\right)\Gamma\left(\frac{e+a}2\right)\Gamma\left(\frac{e+b}2\right)}-\frac 1{\Gamma\left(\frac{e-a}2\right)\Gamma\left(\frac{e-b}2\right)\Gamma\left(\frac{d+a}2\right)\Gamma\left(\frac{d+b}2\right)}\right)
\end{align*}
holds.
\item If $\Re(2c-a-b)>-1$, then the identity
\begin{align*}
\sum_{0\leq n}\frac{(a,b,c)_n}{n!\left(\frac{a+b+1}2,2c\right)_n}&=\frac{\Gamma\left(\frac 12\right)\Gamma\left(c+\frac 12\right)\Gamma\left(\frac{a+b+1}2\right)\Gamma\left(c-\frac{a+b-1}2\right)}{\Gamma\left(\frac{a+1}2\right)\Gamma\left(c-\frac{a-1}2\right)\Gamma\left(\frac{b+1}2\right)\Gamma\left(c-\frac{b-1}2\right)}
\end{align*}
holds.
\item If $\Re(2c-a-b)>0$, then the identity
\begin{align*}
\sum_{0\leq n}\frac{(a,b,c)_n}{n!\left(\frac{a+b}2,2c\right)_n}&=\frac{\Gamma\left(\frac 12\right)\Gamma\left(c+\frac 12\right)\Gamma\left(\frac{a+b}2\right)\Gamma\left(c-\frac{a+b}2\right)}{\Gamma\left(\frac a2\right)\Gamma\left(c-\frac a2\right)\Gamma\left(\frac{b+1}2\right)\Gamma\left(c-\frac{b-1}2\right)}+\frac{\Gamma\left(\frac 12\right)\Gamma\left(c+\frac 12\right)\Gamma\left(\frac{a+b}2\right)\Gamma\left(c-\frac{a+b}2\right)}{\Gamma\left(\frac{a+1}2\right)\Gamma\left(c-\frac{a-1}2\right)\Gamma\left(\frac b2\right)\Gamma\left(c-\frac b2\right)}
\end{align*}
holds.
\item If $\Re(2c-a-b)>-2$, then the identity
\begin{align*}
\sum_{0\leq n}\frac{(a,b,c)_n}{n!\left(\frac{a+b+2}2,2c\right)_n}&=\frac{a+b}{a-b}\left(\frac{\Gamma\left(\frac 12\right)\Gamma\left(c+\frac 12\right)\Gamma\left(\frac{a+b}2\right)\Gamma\left(c-\frac{a+b}2\right)}{\Gamma\left(\frac a2\right)\Gamma\left(c-\frac a2\right)\Gamma\left(\frac{b+1}2\right)\Gamma\left(c-\frac{b-1}2\right)}-\frac{\Gamma\left(\frac 12\right)\Gamma\left(c+\frac 12\right)\Gamma\left(\frac{a+b}2\right)\Gamma\left(c-\frac{a+b}2\right)}{\Gamma\left(\frac{a+1}2\right)\Gamma\left(c-\frac{a-1}2\right)\Gamma\left(\frac b2\right)\Gamma\left(c-\frac b2\right)}\right)
\end{align*}
holds.
\end{enumerate}
\end{Lem}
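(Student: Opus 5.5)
The statement is a compilation of known summation theorems: (iii) is Watson's formula, and (i), (ii), (iv), (v) are contiguous extensions due to Lavoie, Grondin and Rathie. The plan is therefore to derive every part from two classical evaluations, Watson's formula (iii) and Whipple's formula (Theorem \ref{thm:whipple}), using elementary contiguous relations.

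For (iv) and (v), write $F(a,b)$ for the series $\sum_n \frac{(a,b,c)_n}{n!(f,2c)_n}$ with the lower parameters $f,2c$ held fixed. Since $a(a+1)_n=(a)_n(a+n)$, we have, term by term,
\begin{align*}
aF(a+1,b)-bF(a,b+1)=(a-b)F(a,b).
\end{align*}
Take $f=\frac{a+b+2}{2}$. Then both $F(a+1,b)$ and $F(a,b+1)$ are Watson series with denominator $\frac{(a+1)+b+1}{2}$, so (iii) evaluates them. Substituting into the relation gives $F(a,b)$ with denominator $\frac{a+b+2}{2}$, which is (v) after writing $\Gamma(\frac{a+b}{2}+1)=\frac{a+b}{2}\Gamma(\frac{a+b}{2})$ and similar rearrangements. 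For (iv), use the companion relation obtained from $(f-1)_n(f-1+n)=(f-1)(f)_n$, i.e.\ a lower-parameter contiguous relation. This expresses the series with denominator $\frac{a+b}{2}$ through Watson series whose upper parameters are shifted so that the Watson condition holds. Alternatively, apply the same upper-parameter trick to $(a-1,b)$ and $(a,b-1)$, where the denominator $\frac{a+b}{2}$ is Watson-admissible, and combine.

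For (i) and (ii), fix $a+b=1$ and use the analogous contiguous relations in the lower parameters $d,e$. Let $G(d,e)$ denote the series $\sum_n \frac{(a,b,c)_n}{n!(d,e)_n}$. From $(d)_n(d+n)=d(d+1)_n$, the combination $dG(d,e)-eG(d+1,e-1)\cdot(\text{suitable factor})$ reduces, modulo a single $c$-shift, to Whipple series with $d'+e'=2c'+1$. Concretely, I would shift $c\mapsto c\pm1$ and $d\mapsto d\pm1$ so that each resulting ${}_3F_2$ satisfies Whipple's hypothesis $d+e=2c+1$, then solve the resulting linear system. Each of the two terms in (i) and (ii) should arise from one of two Whipple evaluations. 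The duplication formula $\Gamma(2z)=\pi^{-1/2}2^{2z-1}\Gamma(z)\Gamma(z+\frac12)$ converts $2^{\pm 2c}$ factors, and $a+b=1$ is used to pair $\Gamma(\frac{d\pm a}{2})$ with $\Gamma(\frac{d\mp b}{2})$.

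Finally, each identity is first established where all series converge absolutely and the prerequisite formulas apply. The stated ranges $\Re(c)>1$, $\Re(c)>-1$, $\Re(2c-a-b)>-1,0,-2$ are then reached by analytic continuation in the parameters, since both sides are meromorphic there.

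The main obstacle will be the Gamma-function bookkeeping in (i) and (ii): choosing the right contiguous combination so that exactly two Whipple-admissible series appear, and then matching the signs and the prefactors $1/(c-1)$ and $1/(d-e)$. I would first verify the combinations numerically at generic parameters before simplifying symbolically.
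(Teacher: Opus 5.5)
The paper gives no proof of this lemma. It quotes (iii) from Watson and (i), (ii), (iv), (v) from Lavoie--Grondin--Rathie. Your plan of deducing everything from Watson's and Whipple's formulas by termwise contiguous relations is therefore a genuinely different, self-contained route, and it does work. Your treatment of (v) is complete: $aF(a+1,b)-bF(a,b+1)=(a-b)F(a,b)$ together with two Watson evaluations gives (v) for $\Re(2c-a-b)>0$, and analytic continuation gives the rest.

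For (i), (ii) and (iv), however, the proposal never states the relation to use, and the relations it does name fail. Write $G(c;d,e)=\sum_n\frac{(a,b,c)_n}{n!(d,e)_n}$ with $a+b=1$, and $F(a,b;f)=\sum_n\frac{(a,b,c)_n}{n!(f,2c)_n}$.

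\begin{itemize}
\item In (i) and (ii), $G(c;d,e)$ and $G(c;d+1,e-1)$ have the same $d+e$. So when $d+e\in\{2c,2c+2\}$, neither is Whipple-admissible. A useful relation must change $d+e-2c$ by exactly one.
\item Your ``alternative'' for (iv) also fails. The upper-parameter relation applied to $F(a-1,b)$ and $F(a,b-1)$ reads $(a-1)F(a,b-1)-(b-1)F(a-1,b)=(a-b)F(a-1,b-1)$. This yields another type-(v) series, not $F(a,b;\frac{a+b}2)$.
\end{itemize}

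The missing relations are one-liners, each checked termwise:
\begin{align*}
(d-e)\,G(c;d,e)&=(d-1)\,G(c;d-1,e)-(e-1)\,G(c;d,e-1),\\
2(c-1)\,G(c;d,e)&=(d-1)\,G(c-1;d-1,e)+(e-1)\,G(c-1;d,e-1)\qquad\text{if } d+e=2c,\\
(a+b)\,F(a,b;\tfrac{a+b}2)&=a\,F(a+1,b;\tfrac{a+b}2+1)+b\,F(a,b+1;\tfrac{a+b}2+1).
\end{align*}
\begin{itemize}
\item The first line uses $(d-1)_n(d-1+n)=(d-1)(d)_n$ and needs no condition on $d+e$.
\item The second line additionally uses $(c-1)_n=(c-1)(c)_n/(c-1+n)$ and $(d-1+n)+(e-1+n)=2(c-1+n)$.
\item The third line is your lower-parameter relation $(f)_n(f+n)=f(f+1)_n$, combined with $(a+n)+(b+n)=2(f+n)$ for $f=\frac{a+b}2$.
\end{itemize}

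These give the three parts directly:
\begin{itemize}
\item For (ii), take $d+e=2c+2$. The right side of the first line is then two Whipple series with the same $c$. Since $a+b=1$ gives $\frac{a+d-1}2=\frac{d-b}2$ and $\frac{b+d-1}2=\frac{d-a}2$, and $(d-1)\Gamma(d-1)=\Gamma(d)$, Theorem~\ref{thm:whipple} gives (ii) verbatim for $\Re c>0$. Continuation extends it to $\Re c>-1$.
\item For (i), take $d+e=2c$. The second line then involves two Whipple series with parameter $c-1$. This gives (i), with the factor $2^{1-2(c-1)}/2=2^{2-2c}$, on exactly the range $\Re c>1$.
\item For (iv), the third line involves two Watson series, with denominators $\frac{(a+1)+b+1}2$ and $\frac{a+(b+1)+1}2$. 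Using $a/\Gamma(\frac a2+1)=2/\Gamma(\frac a2)$ and $\Gamma(\frac{a+b}2+1)=\frac{a+b}2\Gamma(\frac{a+b}2)$, this gives (iv) on $\Re(2c-a-b)>0$.
\end{itemize}
No duplication-formula bookkeeping is needed.

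With these relations inserted, your argument is a complete proof. Compared with the paper, it removes the dependence on the Lavoie--Grondin--Rathie papers, while still resting on Watson's and Whipple's theorems.
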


\begin{Lem}\label{lem:main}
Let $N$ be a nonnegative integer, and let $c,d,e$ be complex numbers satisfying $d+e=2c$ and $\Re(c)<1$. Assume that all
the expressions below are well-defined. Then the following identities hold:
\begin{align*}
\sum_{0\leq n\leq N}\frac{(c)_n}{n!(d,e)_{n+1}}\frac{(-1)^n(n+N)!}{(N-n)!}&=\frac 1{2(d-e)}(A_N-B_N),\\
\sum_{0\leq n\leq N}\frac{(c)_{n+1}}{n!(d,e)_{n+1}}\frac{(-1)^n(n+N)!}{(N-n)!}&=\frac{(-1)^N}{4}(A_N+B_N),\\
\sum_{N\leq m}\frac 1{(m+N+1)!(m-N)!}\frac{m!(d,e)_m}{(c)_{m+1}}&=\frac{(-1)^N}{(d-e)\sin\pi c}(C_N-D_N),\\
\sum_{N\leq m}\frac 1{(m+N+1)!(m-N)!}\frac{m!(d,e)_m}{(c)_{m}}&=\frac 1{2\sin\pi c}(C_N+D_N),
\end{align*}

where
\begin{align*}
A_N&\coloneqq\begin{cases}
\displaystyle\frac{\left(1-\frac d2,\frac{1-e}2\right)_k}{\left(\frac{d+1}2\right)_k\left(\frac e2\right)_{k+1}}& N=2k\\
\displaystyle\frac{\left(1-\frac d2\right)_k\left(\frac{1-e}2\right)_{k+1}}{\left(\frac e2,\frac{d+1}2\right)_{k+1}}&N=2k+1
\end{cases},\\
B_N&\coloneqq\begin{cases}
\displaystyle\frac{\left(1-\frac e2,\frac{1-d}2\right)_k}{\left(\frac{e+1}2\right)_k\left(\frac d2\right)_{k+1}} & N=2k\\
\displaystyle\frac{\left(1-\frac e2\right)_k\left(\frac{1-d}2\right)_{k+1}}{\left(\frac d2,\frac{e+1}2\right)_{k+1}} & N=2k+1
\end{cases},\\
C_N&\coloneqq\begin{cases}
\displaystyle\frac{\sin\frac{\pi d}2\cos\frac{\pi e}2\left(\frac{d+1}2,\frac{e}2\right)_k}{\left(1-\frac d2\right)_k\left(\frac{1-e}2\right)_{k+1}} & N=2k\\
\displaystyle\frac{\sin\frac{\pi d}2\cos\frac{\pi e}2\left(\frac e2\right)_{k+1}\left(\frac{d+1}2\right)_k}{\left(\frac{1-e}2,1-\frac d2\right)_{k+1}}&N=2k+1
\end{cases},\\
D_N&\coloneqq\begin{cases}
\displaystyle\frac{\sin\frac{\pi e}2\cos\frac{\pi d}2\left(\frac{e+1}2,\frac d2\right)_k}{\left(1-\frac e2\right)_k\left(\frac{1-d}2\right)_{k+1}}&N=2k\\
\displaystyle\frac{\sin\frac{\pi e}2\cos\frac{\pi d}2\left(\frac d2\right)_{k+1}\left(\frac{e+1}2\right)_k}{\left(\frac{1-d}2,1-\frac e2\right)_{k+1}} & N=2k+1
\end{cases}.
\end{align*}
\end{Lem}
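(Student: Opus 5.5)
Since $\frac{(-1)^n(n+N)!}{(N-n)!}=(-N)_n(N+1)_n\,N!/N!$, I will rewrite the first two sums of Lemma~\ref{lem:main} as terminating ${}_3F_2$ series. We have $(d,e)_{n+1}=de\,(d+1,e+1)_n$ and $(c)_{n+1}=c\,(c+1)_n$. So the first sum equals
\[
\frac{N!}{de}\sum_n\frac{(-N,N+1,c)_n}{n!\,(d+1,e+1)_n}.
\]
Here the upper parameters $a=-N$, $b=N+1$ satisfy $a+b=1$, and $(d+1)+(e+1)=2c+2$. This is exactly the setting of Lemma~\ref{lem:lgr}(ii), with $d,e$ replaced by $d+1,e+1$. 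The second sum equals $\frac{cN!}{de}\,{}_3F_2(-N,N+1,c+1;d+1,e+1;1)$, with $(d+1)+(e+1)=2(c+1)$, which is the setting of Lemma~\ref{lem:lgr}(i) with $c\to c+1$.

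Both series terminate, so the convergence conditions ($\Re c>-1$, resp.\ $\Re(c+1)>1$) can be removed by analytic continuation in $c$: both sides are rational or meromorphic in the parameters. The main computation is evaluating the resulting Gamma quotients $1/\Gamma\bigl(\frac{d+1+N}{2}\bigr)\Gamma\bigl(\frac{d-N}{2}\bigr)\cdots$ at $a=-N$, $b=N+1$. I split by parity $N=2k$ or $N=2k+1$. I use the reflection formula and $\Gamma(x+k)=\Gamma(x)(x)_k$ to turn $\Gamma(d+1)\Gamma(e+1)/\Gamma(\frac{d+1+N}{2})\Gamma(\frac{d-N}{2})\Gamma(\frac{e+1-N}{2})\Gamma(\frac{e+2+N}{2})$ into Pochhammer ratios. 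For instance, $1/\Gamma(\frac d2-k)=(-1)^k(1-\frac d2)_k/\Gamma(\frac d2)$, and the duplication formula absorbs $\Gamma(d+1)/\Gamma(\frac{d+1}{2})\Gamma(\frac d2+1)=2^{d}/\sqrt\pi$. Then $2^{-2c}\cdot 2^{d}\cdot 2^{e}$ collapses because $d+e=2c$. This should produce exactly $A_N$ and $B_N$, with the stated signs and factors $\frac1{2(d-e)}$ and $\frac{(-1)^N}4$; the factor $\frac{1}{c}$ from (i) cancels against the $c$ prefactor.

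For the last two sums I substitute $m=N+j$ and write them as nonterminating series:
\[
\sum_j \frac{(N+j)!\,(d,e)_{N+j}}{(2N+1+j)!\,j!\,(c)_{N+j+1}}
=\frac{N!(d,e)_N}{(2N+1)!(c)_{N+1}}\sum_j\frac{(N+1,d+N,e+N)_j}{j!\,(2N+2,c+N+1)_j}.
\]
The parameters now fit Lemma~\ref{lem:lgr}(iii)--(v). Take upper $a'=d+N$, $b'=e+N$, and $c'=N+1$, so that $2c'=2N+2$. The lower parameter $\frac{a'+b'}{2}+\frac{\epsilon}{2}=c+N+\frac{\epsilon}{2}$ must equal $c+N+1$, which gives $\epsilon=2$, so this is case (v). The convergence condition $\Re(2c'-a'-b')=\Re(2-2c)>-2$ holds, and in fact the series converges precisely because $\Re c<1$. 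The analogous sum with $(c)_m$ in the denominator has lower parameter $c+N$, which is case (iv) (convergence $2-2c>0$). The prefactor $\frac{a'+b'}{a'-b'}=\frac{2c+2N}{d-e}$ in (v) explains the $\frac{1}{d-e}$ in the third identity.

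Then comes the same parity-split Gamma bookkeeping. The terms $\Gamma(c'-\frac{a'+b'}2)=\Gamma(1-c)$ and $\Gamma(\frac{a'+b'}{2})=\Gamma(c+N)$ combine with $(c)_{N}$ via reflection to give $\pi/\sin\pi c$. Products like $\Gamma(\frac{d+N}{2})\Gamma(1-\frac{d+N}{2})$, arising from $1/\Gamma(c'-\frac{a'}{2})=1/\Gamma(1+\frac{N-d}{2})$ paired with $1/\Gamma(\frac{d+N}{2})$, yield $\sin\frac{\pi d}{2}$ or $\cos\frac{\pi d}{2}$ depending on the parity of $N$ (through $\sin\frac{\pi(d+N)}{2}$). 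This is the source of the trigonometric factors in $C_N$ and $D_N$. The leftover Pochhammers should assemble into the stated $(\frac{d+1}{2},\frac e2)_k/(1-\frac d2)_k(\frac{1-e}{2})_{k+1}$ etc.

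The main obstacle is this last Gamma-to-Pochhammer reduction in the nonterminating case. The difficulty is tracking the parity-dependent signs $(-1)^k$ and the powers of $2$ from the duplication formula (with $\Gamma(\frac12)\Gamma(N+\frac32)$ against $(2N+1)!$), so that the overall $(-1)^N$ and the normalizations $\frac{1}{(d-e)\sin\pi c}$, $\frac{1}{2\sin\pi c}$ come out exactly. I would first verify the formulas at $N=0$ and $N=1$ as a check, and then do the general $k$ computation.
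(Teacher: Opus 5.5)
This matches the paper's proof. You rewrite $(-1)^n(n+N)!/(N-n)!=(-N)_n(N+1)_n$ and apply Lemma \ref{lem:lgr}(ii), and (i) with $c\to c+1$, to the terminating sums; you shift $m=N+j$ and apply (v), (iv) to the tails; then you do the duplication/reflection bookkeeping, which the paper itself writes out only for the first identity with $N=2k$.

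One slip to fix: that Pochhammer identity holds exactly, so the prefactors are $\frac1{de}$ and $\frac{c}{de}$, not $\frac{N!}{de}$ and $\frac{cN!}{de}$. Your planned checks at $N=0,1$ would not catch this, since $0!=1!=1$.
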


\begin{proof}
We prove the first identity in the case $N=2k$. Applying Lemma \ref{lem:lgr} (ii), we obtain
\begin{align*}
\sum_{0\leq n\leq N}\frac{(c)_n}{n!(d,e)_{n+1}}\frac{(-1)^n(n+N)!}{(N-n)!}&=\frac 1{de}\sum_{0\leq n\leq N}\frac{(-N,N+1,c)_n}{n!(d+1,e+1)_{n}}\\
&=\frac 1{de}\frac{2^{1-2c}\Gamma(d+1)\Gamma(e+1)}{d-e}\\
&\qquad\cdot\bigg(\frac 1{\Gamma\left(\frac{d+1}2+k\right)\Gamma\left(\frac{d}2-k\right)\Gamma\left(\frac{e+1}2-k\right)\Gamma\left(\frac{e}2+k+1\right)}\\
&\qquad\qquad-\frac 1{\Gamma\left(\frac{e+1}2+k\right)\Gamma\left(\frac{e}2-k\right)\Gamma\left(\frac{d+1}2-k\right)\Gamma\left(\frac{d}2+k+1\right)}\bigg)\\
&=\frac{2^{1-2c}\Gamma(d)\Gamma(e)}{(d-e)\Gamma\left(\frac d2\right)\Gamma\left(\frac{d+1}2\right)\Gamma\left(\frac e2\right)\Gamma\left(\frac{e+1}2\right)}\\
&\qquad\cdot\bigg(\frac{\left(\frac d2-k,\frac{e+1}2-k\right)_k}{\left(\frac{d+1}2\right)_k\left(\frac e2\right)_{k+1}}-\frac {\left(\frac e2-k,\frac{d+1}2-k\right)_k}{\left(\frac{e+1}2\right)_k\left(\frac d2\right)_{k+1}}\bigg).
\end{align*}
Using the identities $\Gamma\left(\frac x2\right)\Gamma\left(\frac{x+1}2\right)=2^{1-x}\sqrt{\pi}\Gamma(x)$ and $(x-k)_k=(-1)^k(1-x)_k,$ we obtain the first identity. All the remaining cases are proved similarly, so we omit the details.
\end{proof}

\begin{Lem}\label{lem:partial_fraction}
If $a+b=1$, the identity
\[
\frac{(a,b)_n}{(a,b)_{m+1}}=\sum_{k=n}^m\frac{(-1)^k(2k+1)}{(k+a)(k+b)}\frac{(-1)^n(n+k)!}{(k-n)!(m-k)!(m+k+1)!}
\]
holds.
\end{Lem}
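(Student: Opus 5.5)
The plan is to regard both sides as rational functions of the single quantity $x\coloneqq ab$ and to use the partial fraction expansion in $x$. Since $a+b=1$, every factor $(j+a)(j+b)=j^2+(a+b)j+ab$ becomes $j(j+1)+x$. Hence, for $0\le n\le m$,
\[
\frac{(a,b)_n}{(a,b)_{m+1}}=\prod_{j=n}^{m}\frac{1}{(j+a)(j+b)}=\prod_{j=n}^{m}\frac{1}{j(j+1)+x}.
\]
The right-hand side is a proper rational function of $x$. Its poles are at $x=-k(k+1)$ for $n\le k\le m$, and they are simple because $k\mapsto k(k+1)$ is injective on nonnegative integers. (I will assume $n\le m$, which is the range in which the lemma is used; for $n=m+1$ the sum is empty.)

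Next I apply the standard partial fraction decomposition $\prod_{k}(x+\lambda_k)^{-1}=\sum_k (x+\lambda_k)^{-1}\prod_{j\ne k}(\lambda_j-\lambda_k)^{-1}$ with $\lambda_j=j(j+1)$. This gives
\[
\prod_{j=n}^{m}\frac{1}{j(j+1)+x}=\sum_{k=n}^m\frac{1}{(k+a)(k+b)}\prod_{\substack{n\le j\le m\\ j\ne k}}\frac{1}{j(j+1)-k(k+1)}.
\]
Each factor in the inner product factors as $j(j+1)-k(k+1)=(j-k)(j+k+1)$.

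The only real computation is evaluating the inner product, which I split at $j=k$. For $n\le j\le k-1$,
\[
\prod_{j=n}^{k-1}(j-k)=(-1)^{k-n}(k-n)!,\qquad \prod_{j=n}^{k-1}(j+k+1)=\frac{(2k)!}{(n+k)!}.
\]
For $k+1\le j\le m$,
\[
\prod_{j=k+1}^{m}(j-k)=(m-k)!,\qquad \prod_{j=k+1}^{m}(j+k+1)=\frac{(m+k+1)!}{(2k+1)!}.
\]
Multiplying these four pieces gives
\[
\prod_{j\ne k}(j-k)(j+k+1)=\frac{(-1)^{k-n}(k-n)!\,(m-k)!\,(m+k+1)!}{(n+k)!\,(2k+1)}.
\]
Its reciprocal is exactly $(-1)^k(-1)^n(2k+1)(n+k)!/\bigl((k-n)!(m-k)!(m+k+1)!\bigr)$, using $(-1)^{-(k-n)}=(-1)^{k+n}$. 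Substituting this back reproduces the stated identity.

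I expect no serious obstacle here. The step needing care is the bookkeeping of the factorial ratios and of the sign $(-1)^{k-n}$. The hypothesis $a+b=1$ enters only to make the denominator depend on $a,b$ through $ab$ alone. The identity holds as rational functions and hence whenever both sides are defined.
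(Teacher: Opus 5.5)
Your proof is correct and is essentially the paper's argument: the paper sets $a=\frac12+s$ and takes partial fractions in $s^2$ using the factors $(k+\frac12)^2-s^2=k(k+1)+ab$, so its variable is your $x=ab$ up to the shift $x=\frac14-s^2$. You also carry out the coefficient computation $\prod_{j\ne k}(j-k)(j+k+1)$, which the paper leaves implicit, and your sign and factorial bookkeeping is right.
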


\begin{proof}
Let $a=\frac 12+s$. A partial fraction decomposition with respect to $s^2$ gives
\begin{align*}
\frac{(a,b)_n}{(a,b)_{m+1}}&=\frac{\left(\frac 12-s,\frac 12+s\right)_n}{\left(\frac 12-s,\frac 12+s\right)_{m+1}}\\
&=\prod_{n\leq k\leq m}\frac 1{\left(k+\frac 12\right)^2-s^2}\\
&=\sum_{k=n}^m\frac{(-1)^k(2k+1)}{\left(k+\frac 12\right)^2-s^2}\frac{(-1)^n(n+k)!}{(k-n)!(m-k)!(m+k+1)!}\\
&=\sum_{k=n}^m\frac{(-1)^k(2k+1)}{(k+a)(k+b)}\frac{(-1)^n(n+k)!}{(k-n)!(m-k)!(m+k+1)!}.
\end{align*}
\end{proof}

\begin{proof}[Proof of Theorem \ref{thm:double_whipple}]
We first prove part (i). Applying Lemma \ref{lem:partial_fraction}, we obtain
\begin{align*}
&\sum_{0\leq n\leq m}\frac{(a,b)_n(c)_{n+1}}{n!(d,e)_{n+1}}\frac{m!(d,e)_m}{(a,b,c)_{m+1}}\\
&=\sum_{0\leq n\leq m}\frac{(c)_{n+1}}{n!(d,e)_{n+1}}\frac{m!(d,e)_m}{(c)_{m+1}}\sum_{k=n}^m\frac{(-1)^k(2k+1)}{(k+a)(k+b)}\frac{(-1)^n(n+k)!}{(k-n)!(m-k)!(m+k+1)!}\\
&=\sum_{0\leq k}\frac{(-1)^k(2k+1)}{(k+a)(k+b)}\sum_{0\leq n\leq k}\frac{(c)_{n+1}}{n!(d,e)_{n+1}}\frac{(-1)^n(n+k)!}{(k-n)!}\sum_{k\leq m}\frac{m!(d,e)_m}{(c)_{m+1}}\frac{1}{(m-k)!(m+k+1)!}
\end{align*}

By Lemma \ref{lem:lgr} (iii) and Theorem \ref{thm:whipple}, we obtain
\begin{align*}
\sum_{k\leq m}\frac 1{(m+k+1)!(m-k)!}\frac{m!(d,e)_m}{(c)_{m+1}}&=\frac{\pi^2}{\sin\pi c}\frac{(d,e)_k}{2^{2k+1}\Gamma\left(\frac{d+k+1}{2}\right)\Gamma\left(\frac{e+k+1}2\right)\Gamma\left(\frac{3-d+k}2\right)\Gamma\left(\frac{3-e+k}2\right)},\\
\sum_{0\leq n}\frac{(c)_{n+1}}{n!(d,e)_{n+1}}\frac{(-1)^n(n+k)!}{(k-n)!}&=\frac{2^{-1-2c}\pi c\Gamma(d)\Gamma(e)}{\Gamma\left(\frac{1+d-k}2\right)\Gamma\left(\frac{1+e-k}2\right)\Gamma\left(\frac{d+k+2}2\right)\Gamma\left(\frac{e+k+2}2\right)}.
\end{align*}
Hence,
\begin{align*}
&\sum_{0\leq n\leq m}\frac{(a,b)_n(c)_{n+1}}{n!(d,e)_{n+1}}\frac{m!(d,e)_m}{(a,b,c)_{m+1}}\\
&=\sum_{0\leq k}\frac{(-1)^k(2k+1)}{(k+a)(k+b)}\frac{\pi^2}{\sin\pi c}\frac{(d,e)_k}{2^{2k+1}\Gamma\left(\frac{d+k+1}{2}\right)\Gamma\left(\frac{e+k+1}2\right)\Gamma\left(\frac{3-d+k}2\right)\Gamma\left(\frac{3-e+k}2\right)}\\
&\qquad\cdot\frac{2^{-1-2c}\pi c\Gamma(d)\Gamma(e)}{\Gamma\left(\frac{1+d-k}2\right)\Gamma\left(\frac{1+e-k}2\right)\Gamma\left(\frac{d+k+2}2\right)\Gamma\left(\frac{e+k+2}2\right)}\\
&=\frac{2c}{\sin\pi c}\sum_{0\leq k}\frac{(-1)^k(2k+1)\cos\frac{\pi(d-k)}2\cos\frac{\pi(e-k)}2}{(k+a)(k+b)(k+d)(k+1-d)(k+e)(k+1-e)}.
\end{align*}

We next prove part (ii). A similar argument gives
\begin{align*}
&\sum_{0\leq n\leq m}\frac{(n+m+2c)(a,b,c)_n}{n!(d,e)_{n+1}}\frac{m!(d,e)_m}{(a,b,c)_{m+1}}\\
&=\sum_{0\leq n\leq m}\frac{(a,b)_n(c)_{n+1}}{n!(d,e)_{n+1}}\frac{m!(d,e)_m}{(a,b,c)_{m+1}}+\sum_{0\leq n\leq m}\frac{(a,b,c)_n}{n!(d,e)_{n+1}}\frac{m!(d,e)_m}{(a,b)_{m+1}(c)_m}\\
&=\sum_{0\leq k}\frac{(-1)^k(2k+1)}{(k+a)(k+b)}\sum_{0\leq n\leq k}\frac{(c)_{n+1}}{n!(d,e)_{n+1}}\frac{(-1)^n(n+k)!}{(k-n)!}\sum_{k\leq m}\frac{m!(d,e)_m}{(c)_{m+1}}\frac{1}{(m-k)!(m+k+1)!}\\
&\qquad+\sum_{0\leq k}\frac{(-1)^k(2k+1)}{(k+a)(k+b)}\sum_{0\leq n\leq k}\frac{(c)_{n}}{n!(d,e)_{n+1}}\frac{(-1)^n(n+k)!}{(k-n)!}\sum_{k\leq m}\frac{m!(d,e)_m}{(c)_{m}}\frac{1}{(m-k)!(m+k+1)!}
\end{align*}
Applying Lemma \ref{lem:main} gives
\begin{align*}
&\sum_{0\leq n\leq m}\frac{(n+m+2c)(a,b,c)_n}{n!(d,e)_{n+1}}\frac{m!(d,e)_m}{(a,b,c)_{m+1}}\\
&=\frac 1{4(d-e)\sin\pi c}\sum_{0\leq k}\frac{(-1)^k(2k+1)}{(k+a)(k+b)}(A_k+B_k)(C_k-D_k)\\
&\qquad+\frac 1{4(d-e)\sin\pi c}\sum_{0\leq k}\frac{(-1)^k(2k+1)}{(k+a)(k+b)}(A_k-B_k)(C_k+D_k)\\
&=\frac 1{2(d-e)\sin\pi c}\sum_{0\leq k}\frac{(-1)^k(2k+1)}{(k+a)(k+b)}(A_kC_k-B_kD_k).
\end{align*}
Moreover,
\begin{align*}
&\sum_{0\leq k}\frac{(-1)^k(2k+1)}{(k+a)(k+b)}(A_kC_k-B_kD_k)\\
&=\sum_{0\leq k}\frac{4k+1}{(2k+a)(2k+b)}\left(\frac{\sin\frac{\pi d}2\cos\frac{\pi e}2}{\left(k+\frac e2\right)\left(k+\frac{1-e}2\right)}-\frac{\sin\frac{\pi e}2\cos\frac{\pi d}2}{\left(k+\frac{d}2\right)\left(k+\frac{1-d}2\right)}\right)\\
&\qquad-\sum_{0\leq k}\frac{4k+3}{(2k+1+a)(2k+1+b)}\left(\frac{\sin\frac{\pi d}2\cos\frac{\pi e}2}{\left(k+\frac{d+1}2\right)\left(k+1-\frac d2\right)}-\frac{\sin\frac{\pi e}2\cos\frac{\pi d}2}{\left(k+\frac{e+1}2\right)\left(k+1-\frac e2\right)}\right)\\
&=\frac{\sin\pi c}2\sum_{0\leq k}\frac{4k+1}{(2k+a)(2k+b)}\left(\frac{1}{\left(k+\frac e2\right)\left(k+\frac{1-e}2\right)}-\frac{1}{\left(k+\frac{d}2\right)\left(k+\frac{1-d}2\right)}\right)\\
&\qquad-\frac{\sin\pi c}2\sum_{0\leq k}\frac{4k+3}{(2k+1+a)(2k+1+b)}\left(\frac{1}{\left(k+\frac{d+1}2\right)\left(k+1-\frac d2\right)}-\frac{1}{\left(k+\frac{e+1}2\right)\left(k+1-\frac e2\right)}\right)\\
&\qquad+\frac{\sin\frac{\pi(d-e)}2}{2}\sum_{0\leq k}\frac{4k+1}{(2k+a)(2k+b)}\left(\frac{1}{\left(k+\frac e2\right)\left(k+\frac{1-e}2\right)}+\frac{1}{\left(k+\frac{d}2\right)\left(k+\frac{1-d}2\right)}\right)\\
&\qquad-\frac{\sin\frac{\pi(d-e)}2}{2}\sum_{0\leq k}\frac{4k+3}{(2k+1+a)(2k+1+b)}\left(\frac{1}{\left(k+\frac{d+1}2\right)\left(k+1-\frac d2\right)}+\frac{1}{\left(k+\frac{e+1}2\right)\left(k+1-\frac e2\right)}\right)\\
&=2\sin\pi c\sum_{0\leq k}\frac{2k+1}{(k+a)(k+b)}\left(\frac{1}{\left(k+e\right)\left(k+1-e\right)}-\frac{1}{\left(k+d\right)\left(k+1-d\right)}\right)\\
&\qquad+2\sin\frac{\pi(d-e)}2\sum_{0\leq k}\frac{(-1)^k(2k+1)}{(k+a)(k+b)}\left(\frac{1}{\left(k+e\right)\left(k+1-e\right)}+\frac{1}{\left(k+d\right)\left(k+1-d\right)}\right).
\end{align*}
This completes the proof.
\end{proof}

\section{Proof of Theorem \ref{thm:main}}
\begin{Lem}\label{lem:1}
The following identity holds as an identity of formal power series in $s$ and $t$:
\begin{align*}
&\sum_{2\leq r\leq k}s^{2k-4}t^{2r-4}\sum_{\substack{1\leq k_1,\dots,k_r\\k_1+\cdots+k_r=k}}(Z(2k_1,\dots,2k_r)+Z(2k_1-1,2k_2,\dots,2k_{r-1},2k_r+1))\\
&=\frac{\pi\sin\left(\pi s\sqrt{1-t^2}\right)}{s^3t^2\sqrt{1-t^2}\cos\pi s}-\frac{\pi\tan\pi s}{s^3t^2}.
\end{align*}
\end{Lem}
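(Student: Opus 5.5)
The plan is to specialize Theorem \ref{thm:double_whipple}(i) so that its left-hand side becomes the generating series in the statement, and then evaluate its right-hand side in closed form. Write $u=s$ and $v=s\sqrt{1-t^2}$, so that $u^2-v^2=s^2t^2$. I would take
\[
a=\tfrac12+u,\qquad b=\tfrac12-u,\qquad c=\tfrac12,\qquad d=1+v,\qquad e=1-v .
\]
These satisfy $a+b=1$, $d+e=2=2c+1$ and $\Re(c)<1$. The specialization is a guess and should be treated as such.

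\textbf{Step 1: expanding the left-hand side.} For $n\le m$ the summand is a product of three kinds of factors.
\begin{itemize}
\item[(a)] The factor $\frac{(1/2)_n}{n!}\cdot\frac{m!}{(1/2)_m}$, which is exactly the Igarashi weight at $\alpha=\frac12$.
\item[(b)] The factors $1/\bigl((j+\frac12)^2-u^2\bigr)$ for $n\le j\le m$, coming from $(a,b)_n/(a,b)_{m+1}$.
\item[(c)] The factors coming from $(d,e)_m/(d,e)_{n+1}$ and from the shifts in $(c)_{n+1}/(c)_{m+1}$.
\end{itemize}
Each interior index $j$ should contribute a factor of the form
\[
1+\frac{u^2-v^2}{(j+\frac12)^2-u^2}=1+\frac{s^2t^2}{(j+\frac12)^2-s^2}.
\]
Expanding the product over interior $j$ then chooses a subset of intermediate indices. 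Each chosen index gives $t^2\sum_{k\ge1}s^{2k}(j+\frac12)^{-2k}$, and the endpoints $n$ and $m$ give the remaining even powers. This produces the $Z(2k_1,\dots,2k_r)$ terms.

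The asymmetric endpoint factors are $1/(n+\frac12)$ from $(c)_{n+1}$ versus $(c)_n$, and the extra $(m+\frac12)$ from $(c)_{m+1}$. They should combine with the endpoint $u$-expansions to produce the terms $Z(2k_1-1,2k_2,\dots,2k_r+1)$. The precise matching of these endpoint factors is delicate. If this choice of $c,d,e$ does not give exactly the weight $(1/2)_{n_1}/n_1!\cdot n_r!/(1/2)_{n_r}$ together with the two families, I would adjust it, for example by shifting $d,e$ by $\pm\frac12$, or by combining parts (i) and (ii). The aim is that the total equals
\[
s^{4}t^{4}\sum_{r,k}s^{2k-4}t^{2r-4}(\cdots)
\]
plus explicit lower-order ($r\le1$) terms, which are sums $\sum_n (n+\frac12)^{-2k}$ that can be recognized directly.

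\textbf{Step 2: evaluating the right-hand side.} With the above parameters,
\[
\cos\tfrac{\pi(d-n)}2\cos\tfrac{\pi(e-n)}2=\tfrac12\bigl(\cos\pi v+(-1)^{n+1}\bigr)\cdot(\text{sign}),
\]
and $\sin\pi c=1$. The right-hand side therefore splits into a $(-1)^n$-weighted and an unweighted series of rational functions of $n$. I would apply partial fractions in $n$ to
\[
\frac{2n+1}{\bigl((n+\frac12)^2-u^2\bigr)\,(n+1+v)(n-v)(n+1-v)(n+v)},
\]
reducing everything to the classical series $\sum_n \frac{2n+1}{(n+\frac12)^2-x^2}(-1)^n=\frac{\pi}{\cos \pi x}$ and its non-alternating telescoping analogues, and to $\pi\tan\pi x$ and $\pi\cot\pi x$ type sums. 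Collecting terms should give
\[
\frac{\pi\sin\pi v}{v\cos\pi u}-\frac{\pi\tan\pi u}{u},
\]
divided by $s^2t^2$ and the normalizing $s^{\cdot}$ powers. This is the claimed expression $\frac{\pi\sin(\pi s\sqrt{1-t^2})}{s^3t^2\sqrt{1-t^2}\cos\pi s}-\frac{\pi\tan\pi s}{s^3t^2}$.

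\textbf{Step 3: formal-series justification.} Both sides are analytic near $s=t=0$ after subtracting the $r\le1$ terms. The identity of analytic functions, valid for small $u,v$ where all series converge absolutely, then yields the identity of formal power series.

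The main obstacle I expect is Step 1: choosing parameters so that the endpoint factors reproduce exactly the two families $Z(2k_1,\dots,2k_r)$ and $Z(2k_1-1,\dots,2k_r+1)$ with the Igarashi weights, with no stray terms. I would first test small cases, the coefficients of $t^0$ and $t^2$, to pin down the correct specialization before doing the general expansion.
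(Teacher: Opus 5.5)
\textbf{There is a genuine gap.} The specialization you commit to cannot reproduce the generating series, and the part of Theorem~\ref{thm:double_whipple} that is actually needed is (ii), not (i).

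The interior factors rule out (i). With $c=\frac12$, hypothesis (i) forces $d+e=2$, so $d,e=1\pm v$ and
\[
\frac{(d,e)_m}{(d,e)_{n+1}}=\prod_{n<i<m}\bigl((i+1)^2-v^2\bigr)
\]
is centered at integers. But the interior factor you correctly identify, $1+\frac{s^2t^2}{(j+\frac12)^2-s^2}=\frac{(j+\frac12)^2-v^2}{(j+\frac12)^2-s^2}$, requires $d,e=\frac12\pm v$, i.e.\ $d+e=1=2c$. Your fallback of shifting $d,e$ by $-\frac12$ is the right instinct. However, it violates the hypothesis of (i) and lands exactly on the hypothesis of (ii).

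The endpoint factors point the same way. For $n_1<n_r$ the two families combine through
\[
1+\frac{n_1+\frac12}{n_r+\frac12}=\frac{n_1+n_r+1}{n_r+\frac12},
\]
so the series has kernel $(n+m+1)\frac{(1/2)_n}{n!}\frac{m!}{(1/2)_{m+1}}$. This is $(n+m+2c)\frac{(c)_n}{n!}\frac{m!}{(c)_{m+1}}$ at $c=\frac12$, which is the left side of (ii). The kernel $\frac{(c)_{n+1}}{n!}\frac{m!}{(c)_{m+1}}$ of (i) carries only the ratio $\frac{n+\frac12}{m+\frac12}$. So even with correct $d,e$ it would produce only the family $Z(2k_1-1,2k_2,\dots,2k_{r-1},2k_r+1)$. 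Your Step~2 is also not a computation: it asserts that the right side of (i) ``should'' collapse to the target, and nothing supports that for $d,e=1\pm v$.

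\textbf{What is missing.} Put $v=s\sqrt{1-t^2}$. Use $s^{2k-4}t^{2r-4}=s^{2k-2r}(st)^{2r-4}$, sum a geometric series in each index, and apply the product formula
\[
\prod_{n<j<m}\Bigl(1+\frac{s^2t^2}{(j+\frac12)^2-s^2}\Bigr)=\frac{(\frac12-s,\frac12+s)_{n+1}}{(\frac12-v,\frac12+v)_{n+1}}\cdot\frac{(\frac12-v,\frac12+v)_m}{(\frac12-s,\frac12+s)_m}.
\]
The left-hand side then becomes the sum over $0\le n<m$ of the kernel of (ii), with $a=\frac12-s$, $b=\frac12+s$, $c=\frac12$, $d=\frac12-v$, $e=\frac12+v$.

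Extending the sum to $n\le m$ adds the diagonal terms, so you must subtract
\[
2\sum_{n\ge0}\frac{1}{((n+\frac12)^2-s^2)((n+\frac12)^2-v^2)}.
\]
On the right of (ii), the first sum vanishes identically, since $(n+d)(n+1-d)=(n+e)(n+1-e)=(n+\frac12)^2-v^2$. The prefactor of the second sum is $\frac{\sin\pi v}{2v}$.

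Finally, apply partial fractions via $s^2-v^2=s^2t^2$, together with
\[
\sum_{n\ge0}\frac{1}{(n+\frac12)^2-X^2}=\frac{\pi\tan\pi X}{2X},\qquad \sum_{n\ge0}\frac{(-1)^n(2n+1)}{(n+\frac12)^2-X^2}=\frac{\pi}{\cos\pi X}.
\]
This gives the stated closed form, with the two $\tan\pi v$ contributions cancelling.
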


\begin{proof}
By direct calculation, we obtain
\begin{align*}
&\sum_{2\leq r\leq k}s^{2k-4}t^{2r-4}\sum_{\substack{1\leq k_1,\dots,k_r\\k_1+\cdots+k_r=k}}(Z(2k_1,\dots,2k_r)+Z(2k_1-1,2k_2,\dots,2k_{r-1},2k_r+1))\\
&=\sum_{2\leq r\leq k}s^{2k-2r}(st)^{2r-4}\sum_{\substack{1\leq k_1,\dots,k_r\\k_1+\cdots+k_r=k}}\sum_{0\leq n_1<\cdots<n_r}\frac{\left(\frac 12\right)_{n_1}}{n_1!}\frac{1}{\left(n_1+\frac 12\right)^{2k_1}\cdots\left(n_r+\frac 12\right)^{2k_r}}\frac{n_r!}{\left(\frac 12\right)_{n_r}}\frac{\left(n_1+\frac 12\right)+\left(n_r+\frac 12\right)}{n_r+\frac 12}\\
&=\sum_{2\leq r}(st)^{2r-4}\sum_{0\leq n_1<\cdots<n_r}\frac{(n_1+n_r+1)\left(\frac 12\right)_{n_1}}{n_1!}\frac{1}{\left(\left(n_1+\frac 12\right)^2-s^2\right)\cdots\left(\left(n_r+\frac 12\right)^2-s^2\right)}\frac{n_r!}{\left(\frac 12\right)_{n_r+1}}\\
&=\sum_{0\leq r}(st)^{2r}\sum_{0\leq n<m}\frac{(n+m+1)\left(\frac 12\right)_{n}}{n!\left(\left(n+\frac 12\right)^2-s^2\right)}\frac{m!}{\left(\left(m+\frac 12\right)^2-s^2\right)\left(\frac 12\right)_{m+1}}\\
&\qquad\cdot\sum_{n<n_1<\cdots<n_r<m}\frac{1}{\left(\left(n_1+\frac 12\right)^2-s^2\right)\cdots\left(\left(n_r+\frac 12\right)^2-s^2\right)}.
\end{align*}
Using the identity
\begin{align*}
&\sum_{0\leq r}(st)^{2r}\sum_{n<n_1<\cdots<n_r<m}\frac{1}{\left(\left(n_1+\frac 12\right)^2-s^2\right)\cdots\left(\left(n_r+\frac 12\right)^2-s^2\right)}\\
&=\prod_{n<k<m}\left(1+\frac{(st)^2}{\left(k+\frac 12\right)^2-s^2}\right)\\
&=\prod_{n<k<m}\frac{\left(k+\frac 12-s\sqrt{1-t^2}\right)\left(k+\frac 12+s\sqrt{1-t^2}\right)}{\left(k+\frac 12-s\right)\left(k+\frac 12+s\right)}\\
&=\frac{\left(\frac 12-s,\frac 12+s\right)_{n+1}}{\left(\frac 12-s\sqrt{1-t^2},\frac 12+s\sqrt{1-t^2}\right)_{n+1}}\frac{\left(\frac 12-s\sqrt{1-t^2},\frac 12+s\sqrt{1-t^2}\right)_m}{\left(\frac 12-s,\frac 12+s\right)_m},
\end{align*}
we obtain
\begin{align*}
&\sum_{2\leq r\leq k}s^{2k-4}t^{2r-4}\sum_{\substack{1\leq k_1,\dots,k_r\\k_1+\cdots+k_r=k}}(Z(2k_1,\dots,2k_r)+Z(2k_1-1,2k_2,\dots,2k_{r-1},2k_r+1))\\
&=\sum_{0\leq n<m}\frac{(n+m+1)\left(\frac 12\right)_{n}}{n!\left(\left(n+\frac 12\right)^2-s^2\right)}\frac{m!}{\left(\left(m+\frac 12\right)^2-s^2\right)\left(\frac 12\right)_{m+1}}\\
&\qquad\cdot\frac{\left(\frac 12-s,\frac 12+s\right)_{n+1}}{\left(\frac 12-s\sqrt{1-t^2},\frac 12+s\sqrt{1-t^2}\right)_{n+1}}\frac{\left(\frac 12-s\sqrt{1-t^2},\frac 12+s\sqrt{1-t^2}\right)_m}{\left(\frac 12-s,\frac 12+s\right)_m}\\
&=\sum_{0\leq n<m}\frac{(n+m+1)\left(\frac 12,\frac 12-s,\frac 12+s\right)_{n}}{n!\left(\frac 12-s\sqrt{1-t^2},\frac 12+s\sqrt{1-t^2}\right)_{n+1}}\frac{m!\left(\frac 12-s\sqrt{1-t^2},\frac 12+s\sqrt{1-t^2}\right)_m}{\left(\frac 12,\frac 12-s,\frac 12+s\right)_{m+1}}\\
&=\sum_{0\leq n\leq m}\frac{(n+m+1)\left(\frac 12,\frac 12-s,\frac 12+s\right)_{n}}{n!\left(\frac 12-s\sqrt{1-t^2},\frac 12+s\sqrt{1-t^2}\right)_{n+1}}\frac{m!\left(\frac 12-s\sqrt{1-t^2},\frac 12+s\sqrt{1-t^2}\right)_m}{\left(\frac 12,\frac 12-s,\frac 12+s\right)_{m+1}}\\
&\qquad-2\sum_{0\leq n}\frac 1{\left(\left(n+\frac 12\right)^2-s^2\right)\left(\left(n+\frac 12\right)^2-s^2(1-t^2)\right)}
\end{align*}
Applying Theorem \ref{thm:double_whipple} with $a=\frac 12-s,b=\frac 12+s,c=\frac 12,d=\frac 12-s\sqrt{1-t^2},e=\frac 12+s\sqrt{1-t^2}$, we obtain
\begin{align*}
&\sum_{2\leq r\leq k}s^{2k-4}t^{2r-4}\sum_{\substack{1\leq k_1,\dots,k_r\\k_1+\cdots+k_r=k}}(Z(2k_1,\dots,2k_r)+Z(2k_1-1,2k_2,\dots,2k_{r-1},2k_r+1))\\
&=\frac{\sin\left(\pi s\sqrt{1-t^2}\right)}{s\sqrt{1-t^2}}\sum_{0\leq n}\frac{(-1)^n(2n+1)}{\left(\left(n+\frac 12\right)^2-s^2\right)\left(\left(n+\frac 12\right)^2-s^2(1-t^2)\right)}\\
&\qquad -2\sum_{0\leq n}\frac 1{\left(\left(n+\frac 12\right)^2-s^2\right)\left(\left(n+\frac 12\right)^2-s^2(1-t^2)\right)}\\
&=\frac{\sin\left(\pi s\sqrt{1-t^2}\right)}{s^3t^2\sqrt{1-t^2}}\sum_{0\leq n}(-1)^n(2n+1)\left(\frac{1}{\left(n+\frac 12\right)^2-s^2}-\frac 1{\left(n+\frac 12\right)^2-s^2(1-t^2)}\right)\\
&\qquad-\frac{2}{s^2t^2}\sum_{0\leq n}\left(\frac 1{\left(n+\frac 12\right)^2-s^2}-\frac 1{\left(n+\frac 12\right)^2-s^2(1-t^2)}\right)\\
&=\frac{\sin\left(\pi s\sqrt{1-t^2}\right)}{s^3t^2\sqrt{1-t^2}}\left(\frac{\pi}{\cos\pi s}-\frac{\pi}{\cos\left(\pi s\sqrt{1-t^2}\right)}\right)-\frac{\pi\tan\pi s}{s^3t^2}+\frac{\pi\tan\left(\pi s\sqrt{1-t^2}\right)}{s^3t^2\sqrt{1-t^2}}\\
&=\frac{\pi\sin\left(\pi s\sqrt{1-t^2}\right)}{s^3t^2\sqrt{1-t^2}\cos\pi s}-\frac{\pi\tan\pi s}{s^3t^2}\\
\end{align*}
Here, we have used the well-known identities
\begin{align*}
\sum_{0\leq n}\frac{1}{\left(n+\frac 12\right)^2-X^2}&=\frac{\pi\tan\pi X}{2X}\\
\sum_{0\leq n}\frac{(-1)^n(2n+1)}{\left(n+\frac 12\right)^2-X^2}&=\frac{\pi}{\cos\pi X}.
\end{align*}
This completes the proof.
\end{proof}
We also use the following lemma due to Hoffman \cite{h} in the proof of Theorem \ref{thm:main}.
\begin{Lem}[Hoffman {\cite[Lemma 4]{h}}]\label{lem:hoffman}
The following identity holds as an identity of formal power series in $X$ and $Y$:
\[
\frac{\sin\left(\pi\sqrt{X}\sqrt{1-Y}\right)}{\pi\sqrt{X}\sqrt{1-Y}}=\sum_{0\leq k}Y^k\left(P_k(\pi^2 X)\cos\pi\sqrt{X}+Q_k(\pi^2X)\frac{\sin\pi\sqrt{X}}{\pi\sqrt{X}}\right).
\]
Here,
\begin{align*}
P_k(x)&\coloneqq-\frac 1{2^{2k-1}}\sum_{j=0}^{\lfloor\frac{k-1}2\rfloor}\frac{(-4x)^j}{(2j+1)!}\binom{2k-2j-1}k\\
Q_k(x)&\coloneqq\frac 1{2^{2k}}\sum_{j=0}^{\lfloor\frac{k}2\rfloor}\frac{(-4x)^j}{(2j)!}\binom{2k-2j}k.
\end{align*}
\end{Lem}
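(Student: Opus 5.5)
We would prove Lemma~\ref{lem:hoffman} by a Taylor expansion in $Y$, and then identify the coefficients through the finite closed forms of spherical Bessel functions. Put $z=\pi\sqrt X$ and $g(w)=\sin\sqrt w/\sqrt w$, an entire function of $w$. The left-hand side is $g(z^2-z^2Y)$. Expanding around $w=z^2$ gives, as formal power series,
\[
g(z^2(1-Y))=\sum_{0\leq k}\frac{(-z^2Y)^k}{k!}g^{(k)}(z^2).
\]
Both sides of the lemma are even in $\sqrt X$, hence genuine power series in $X$. So it suffices to show that, for each $k$,
\[
\frac{(-z^2)^k}{k!}g^{(k)}(z^2)=P_k(z^2)\cos z+Q_k(z^2)\frac{\sin z}{z}.
\]

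Since $w=z^2$, we have $\frac{d}{dw}=\frac1{2z}\frac{d}{dz}$. Therefore
\[
g^{(k)}(z^2)=2^{-k}\Bigl(\frac1z\frac{d}{dz}\Bigr)^k\frac{\sin z}{z}=\frac{(-1)^k j_k(z)}{2^kz^k},
\]
by Rayleigh's formula for the spherical Bessel function $j_k$. The $k$-th coefficient thus becomes $z^kj_k(z)/(2^kk!)$. We then insert the classical finite expansion
\[
j_k(z)=\frac{\sin z}{z}\sum_{j}\frac{(-1)^j(k+2j)!}{(2j)!(k-2j)!(2z)^{2j}}-\frac{\cos z}{z}\sum_{j}\frac{(-1)^j(k+2j+1)!}{(2j+1)!(k-2j-1)!(2z)^{2j+1}}.
\]
Equivalently, this expansion can be proved directly by induction from $g'(w)=(\cos\sqrt w-g(w))/(2w)$.

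It remains to multiply by $z^k/(2^kk!)$ and reindex by $j\mapsto \lfloor k/2\rfloor-j$, respectively $\lfloor (k-1)/2\rfloor-j$, so that the result is written as a polynomial in $z^2=\pi^2X$. The coefficient of $\frac{\sin z}{z}$ should reduce to $\frac{1}{2^{2k}}\frac{(-4z^2)^j}{(2j)!}\binom{2k-2j}{k}$. The coefficient of $\cos z$ should reduce to $-\frac1{2^{2k-1}}\frac{(-4z^2)^j}{(2j+1)!}\binom{2k-2j-1}{k}$. In both cases this uses the rewriting $\frac{(k+i)!}{i!(k-i)!\,k!}=\binom{k+i}{k}\frac1{i!}$ with $i=k-2j$ or $k-2j-1$.

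We expect this binomial bookkeeping to be the only real obstacle: keeping track of the powers of $2$ and the signs after reindexing, and confirming the ranges $j\le\lfloor k/2\rfloor$ and $j\le\lfloor(k-1)/2\rfloor$. As a check, $k=0$ gives $Q_0=1$ and $P_0=0$, and $k=1$ gives $-\tfrac{z^2}{1}g'(z^2)=\tfrac12(\tfrac{\sin z}{z}-\cos z)$, which matches $Q_1=\tfrac12$ and $P_1=-\tfrac12$. If the direct matching becomes messy, we would instead verify a recurrence. Let $F_k=P_k\cos z+Q_k\frac{\sin z}{z}$. Then $F_{k+1}=\frac{-z^2}{k+1}\frac{d}{dw}\bigl(F_k\cdot(-z^2)^{-k}\bigr)(-z^2)^{k}$, and we would check that the stated $P_k,Q_k$ satisfy the corresponding coupled recurrences coefficientwise.
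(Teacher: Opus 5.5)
The paper gives no proof of this lemma; it simply quotes Hoffman. Your reduction is correct as far as it goes. The Taylor expansion in $Y$, the substitution $\frac{d}{dw}=\frac1{2z}\frac{d}{dz}$ and Rayleigh's formula do give $\frac{(-z^2)^k}{k!}g^{(k)}(z^2)=\frac{z^kj_k(z)}{2^kk!}$.

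The gap is the finite expansion of $j_k$ that you then insert: as written it is false for every $k\ge 1$.
\begin{itemize}
\item For $k=1$ it gives $\frac{\sin z}{z}-\frac{\cos z}{z^2}$, whereas $j_1(z)=\frac{\sin z}{z^2}-\frac{\cos z}{z}$.
\item For $k=2$ it gives $(1-\frac3{z^2})\frac{\sin z}{z}-\frac{3\cos z}{z^2}$ instead of $(\frac3{z^2}-1)\frac{\sin z}{z}-\frac{3\cos z}{z^2}$.
\end{itemize}
With this formula the bookkeeping you postponed cannot close. For odd $k$, multiplying by $z^k/(2^kk!)$ leaves only odd powers of $z$ in front of both $\frac{\sin z}{z}$ and $\cos z$, so you do not even obtain polynomials in $z^2=\pi^2X$. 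Your $k=1$ sanity check was computed directly from $g'$, not from the expansion, which is why it did not expose this.

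The missing ingredient is the phase shift. The correct formula (e.g.\ DLMF \S10.49) is
\[
j_k(z)=\frac{\sin\bigl(z-\frac{k\pi}{2}\bigr)}{z}\sum_{i}\frac{(-1)^i(k+2i)!}{(2i)!\,(k-2i)!\,(2z)^{2i}}+\frac{\cos\bigl(z-\frac{k\pi}{2}\bigr)}{z}\sum_{i}\frac{(-1)^i(k+2i+1)!}{(2i+1)!\,(k-2i-1)!\,(2z)^{2i+1}}.
\]
For $k=2m$ the trigonometric factors are $(-1)^m\sin z$ and $(-1)^m\cos z$. For $k=2m+1$ they are $-(-1)^m\cos z$ and $(-1)^m\sin z$. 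So the parity of $k$ decides which of the two sums multiplies $\frac{\sin z}{z}$.

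Now reindex $i\mapsto m-j$ (and $i\mapsto m-1-j$ for the cosine part when $k=2m$). The total sign becomes $(-1)^j$ on the $\frac{\sin z}{z}$ part and $-(-1)^j$ on the $\cos z$ part. The coefficients then come out exactly as $Q_k(z^2)$ and $P_k(z^2)$ in both parity cases, including the ranges $\lfloor k/2\rfloor$, $\lfloor (k-1)/2\rfloor$ and the powers of $2$; I checked this.

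Your binomial rewriting should read $\frac{(k+i)!}{i!\,(k-i)!\,k!}=\binom{k+i}{k}\frac1{(k-i)!}$, not $\binom{k+i}{k}\frac1{i!}$. With $i=k-2j$ this is $\binom{2k-2j}{k}\frac1{(2j)!}$, as needed.

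Alternatively, your fallback recurrence avoids the Bessel expansion altogether. With $w=z^2$ it reads $F_{k+1}=\frac{kF_k-wF_k'}{k+1}$. Using $\frac{d}{dw}\cos z=-\frac12\frac{\sin z}{z}$ and $\frac{d}{dw}\frac{\sin z}{z}=\frac{1}{2w}\bigl(\cos z-\frac{\sin z}{z}\bigr)$, this becomes the coupled recurrence $(k+1)P_{k+1}=kP_k-wP_k'-\frac12Q_k$ and $(k+1)Q_{k+1}=(k+\frac12)Q_k-wQ_k'+\frac w2P_k$, starting from $P_0=0$, $Q_0=1$. Checking coefficientwise that the stated $P_k,Q_k$ satisfy this would by itself be a complete inductive proof.
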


\begin{proof}[Proof of Theorem \ref{thm:main}]
By Lemmas \ref{lem:1} and \ref{lem:hoffman}, we have
\begin{align*}
&\sum_{2\leq r\leq k}s^{2k-4}t^{2r-4}\sum_{\substack{1\leq k_1,\dots,k_r\\k_1+\cdots+k_r=k}}(Z(2k_1,\dots,2k_r)+Z(2k_1-1,2k_2,\dots,2k_{r-1},2k_r+1))\\
&=\frac{\pi\sin\left(\pi s\sqrt{1-t^2}\right)}{s^3t^2\sqrt{1-t^2}\cos\pi s}-\frac{\pi\tan\pi s}{s^3t^2}\\
&=\frac{\pi^2}{s^2}\sum_{2\leq r}t^{2r-4}\left(P_{r-1}(\pi^2s^2)+Q_{r-1}(\pi^2 s^2)\frac{\tan\pi s}{\pi s}\right).
\end{align*}
Comparing coefficients of $t^{2r-4}$ for $r\geq 2$, we obtain
\begin{align*}
&\sum_{r\leq k}s^{2k-4}\sum_{\substack{1\leq k_1,\dots,k_r\\k_1+\cdots+k_r=k}}(Z(2k_1,\dots,2k_r)+Z(2k_1-1,2k_2,\dots,2k_{r-1},2k_r+1))\\
&=\frac{\pi^2}{s^2}\left(P_{r-1}(\pi^2s^2)+Q_{r-1}(\pi^2 s^2)\frac{\tan\pi s}{\pi s}\right)\\
&=\frac{\pi^2}{s^2}P_{r-1}(\pi^2s^2)+2Q_{r-1}(\pi^2 s^2)\sum_{1\leq k}Z(2k)s^{2k-4}.
\end{align*}
Next, comparing coefficients of $s^{2k-4}$ for $k\geq r$, we obtain
\begin{align*}
&\sum_{\substack{1\leq k_1,\dots,k_r\\k_1+\cdots+k_r=k}}(Z(2k_1,\dots,2k_r)+Z(2k_1-1,2k_2,\dots,2k_{r-1},2k_r+1))\\
&=\frac 1{2^{2r-3}}\sum_{j=0}^{\lfloor{\frac{r-1}2}\rfloor}\frac{(-4\pi^2)^j}{(2j)!}\binom{2r-2j-2}{r-1}Z(2k-2j).
\end{align*}
This completes the proof.
\end{proof}

\end{document}